\newtheorem{Theorem}{Theorem}[section]
\newtheorem{Lemma}[Theorem]{Lemma}
\newtheorem{Proposition}[Theorem]{Proposition}
\newtheorem{Definition}[Theorem]{Definition}
\newtheorem{Corollary}[Theorem]{Corollary}
\newtheorem{Remark}[Theorem]{Remark}
\newcommand{\dif}{\mathrm{d}}
\newcommand{\di}{\mathrm{div}}
\newcommand{\p}{\partial}
\newcommand{\C}{\mathrm{curl}}
\def \inner#1#2{\langle\,#1, #2\,\rangle}
\begin{document}

\title{Existence  of weak solutions for  non-stationary flows of fluids with shear thinning dependent viscosities under slip boundary conditions in  half space}
\author{Aibin Zang}

\date{}

\maketitle

\begin{center}
The School of Mathematics and Computer Science, Yichun University, Yichun, Jiangxi, P.R.China, 336000\\

\footnotesize{Email: zangab05@126.com}

\end{center}

\begin{abstract}
The author treats  the system of motion for an incompressible non-Newtonian fluids of the stress tensor described by $p-$potential function subject to slip boundary conditions in $\mathbb{R}^3_+$. Making use of  the Oseen-type approximation  to this model and  the  $L^\infty$-truncation method, one can establish the existence theorem of weak solutions for $p-$potential flow with $p\in(\frac{8}{5},2]$ provided that large initial data. \\
\textbf{Keywords}: Non-Newtonian Fluid;  slip boundary conditions; Oseen-type approximation; weak solution;\\
\textbf{Mathematics Subject Classification(2000)}: 76D05, 35D05,54B15, 34A34.
\end{abstract}

\section{Introduction}

Let $\Omega\subset\mathbb{R}^3$ be an open set.  For any $T<\infty$, set $Q_T=\Omega\times (0,T).$ The motion of a homogeneous, incompressible fluid through $\Omega$ is governed  by the following equations 
   \begin{equation}\label{1.1}
\left\{\begin{aligned} &\partial_t u-\di S+(u\cdot\nabla)u+\nabla \pi=f, &\mbox{in}~~Q_T \\[3mm]
&\nabla\cdot u = 0, &\mbox{in}~~Q_T \\[2mm]
&u|_{t=0}=u_0(x),& \mbox{in}~~\Omega,
\end{aligned}\right.
\end{equation}
where $u$ is the velocity,  $\pi$ is pressure and $f$ is the force, $u_0$ is initial velocity and $S=(s_{ij})_{i,j=1}^n$ is stress tensor. The above system (1.1) has to be completed by boundary conditions except that $\Omega$ is the whole space and by constitutive assumptions for the extra tensor.
It is well known that the extra tensors are characterized by Stoke's law $S=\nu D(u)$,  where $D(u)$ is the symmetric velocity  gradient, i.e. $$D_{ij}(u)=\frac{1}{2}\left(\frac{\p u_i}{\p x_j}+\frac{\p u_j}{\p x_i}\right).$$ Assume that $\nu$ is a constant, \eqref{1.1} is called incompressible Navier-Stokes equations.

However, there are phenomena that can  be described by  $\nu=\nu(|D(u)|)$ with power-law ansatz, which presents certain non-Newtonian behavior of the fluid flows. These models are frequently used in engineering literature. We can refer the book by Bird, Armstrong and Hassager \cite{BAH} and the survey paper due to M\'{a}lek and Rajagopal \cite{MAR}.  Typical  examples for the constitutive relations are 
\begin{equation}\label{1.2}
\begin{aligned}
S(D(u))=\mu(\delta +|D(u)|)^{q-2}D(u)\\
S(D(u))=\mu(\delta +|D(u)|^2)^{\frac{q-2}{2}}D(u),
\end{aligned}
\end{equation}
with $1<q<\infty, \delta\ge 0,$ and $\mu>0$.

 These models \eqref{1.2} were studied initially by Lady\v{z}henska(see \cite{LA01}, \cite{LA02} and\cite{LA03}) and Lions(see \cite{Lions}). There are many results  improved   in several directions by many mathematicians. For example, concerning  the steady problem, there are several results proving existence of weak solution and interior regularity in bounded domain, we can refer \cite{DMM,fr,FR01,AMG,NW} and references therein. The  results of regularity up to boundary for the Dirichlet problems  were  obtained by T. Shinlkin \cite{SH} and H. Beir\~{a}o da Veiga, {\it{et.al.} }\cite{b1,BE01,BE02,BE03,BE04,BE05,BE06,CR01, CR02}. 

For the unsteady Dirichlet boundary value problems, J. M\'{a}lek, J. Ne\u cas, and M. R\r u\u zi\u cka\cite{MNR} firstly obtained  the weak solution for $p\ge 2$. Later, L. Diening  {\it{et.al.} } had  generalized these results to  $p>\frac{8}{5}$ in \cite{DMM}  and $p>\frac{6}{5}$ in \cite{DRW}.  There are also many papers dealing with regularity of for evolution Dirichlet boundary problems and we refer instance to \cite{AMANN,BA,BP,BE03,BE04,BE05,BE06}.  For the study of  these models with space periodic boundary conditions, we refer to the monograph \cite {ma} and papers\cite{BDR,dien}.

It should be emphasized that theoretical contributions mostly concern the no-slip boundary conditions and space periodic boundary conditions. However, many other boundary conditions are important for engineer experiment and computation science. Commonly used boundary conditions are Navier-type boundary conditions, i.e.
 \begin{equation}\label{1.3} 
 \begin{aligned}
 u\cdot n=0,  (S\cdot n)_\tau-\alpha u_\tau=0,\mbox{on}~~\p\Omega\times(0,T),
 \end{aligned}
 \end{equation}
 where $\alpha$ is the smooth function
which were introduced by Navier in \cite{NA}.  Navier-stokes equations  under Navier slip boundary conditions were studied by many mathematician,\cite{B2, B3} and \cite{xin}. There also are  some results for non-Newtonian fluid. For example, in \cite{b1, EM}, the authors investigated the regularity of steady flows with  shear-dependent viscosity on the slip boundary conditions. M. Bul\'{\i}\v{c}ek, J. M\'{a}lek and K.R. Rajagopal obtained the weak solution for the evolutionary generalized Navier-stokes-like system of pressure and shear-dependent viscosity in  \cite{bu}. Later, in \cite{bu1} they treated the fluid dependent on pressure, shear-rate and temperature  with the Navier-type slip boundary conditions in the bounded domain.

 In this paper, we study the system \eqref{1.1}  with stress tensor $S$ induced by $p-$potential as in Definition 2.1 in the half space $\mathbb{R}^3_+$ with perfect slip boundary conditions on $x_3=0$, i.e.
 \begin{equation}\label{1.4}
 u_3|_{x_3=0}=\left.\frac{\p u_i}{\p x_3}\right|_{x_3=0}=0\,\,(i=1,2).
 \end{equation} 

There have been some existence results in unbounded to study. For instance, Galdi and Grisanti obtained the existence and regularity of 2D exterior problems for steady flows with shear-thinning Ladyzhenskaya model in \cite{ga}. M. Pokorn\'{y} in \cite{po} firstly obtained the weak solution  of Cauchy problem for $p-$potential  models approximated by the Bipolar non-Newtonian incompressible fluid. M. Bul\'{\i}\v{c}ek, J.M\'{a}lek, M. Majdoub and  J. M\'{a}lek studied the Cauchy problem of the fluid with pressure dependent viscosity in \cite{bu2}. They established the existence theorem for unsteady flows approximated by means of higher differentiability models with the power $p>\frac{9}{5}.$

 After  extend the initial velocity and force term by mirror method, you note that it is seem to transfer into the Cauchy problem as previous mentioned in \cite{po} and should have established the existence results. However, because of absent enough regularity and uniqueness, this Cauchy problem can not recover the system of \eqref{1.1} with the boundary conditions \eqref{1.4}. As the boundary of $\mathbb{R}^3_+$ is concerned, the methods mentioned both in \cite{po} are \cite{bu2} are not applied without suitable boundary conditions for higher derivatives of the velocity.

 We overcome these difficulties from the following two ways. Firstly,  we can regularize the convection term $u\cdot\nabla u$ in system \eqref{1.1}  and obtain an approximated system,
   \begin{equation}\label{1.5}
\left\{\begin{aligned} &\partial_t u-\di S(D(u))+(u^\epsilon\cdot\nabla)u+\nabla \pi=f, &\mbox{in}~~Q_T \\[3mm]
&\nabla\cdot u = 0, &\mbox{in}~~Q_T \\[2mm]
&u|_{t=0}=u_0(x),& \mbox{in}~~\Omega,
\end{aligned}\right.
\end{equation}
 subjects to the  slip boundary conditions \eqref{1.4}, where $u^\epsilon$ is regularization of $u$ in $W^{2,2}$. Using Galerkin method, one has got the existence of \eqref{1.5} in any semi-ball in $\mathbb{R}^3_+$. Letting the radius go to infinity, the Galerkin scheme converges to the solution of the approximated system. Second, we verify local Minty-Trick Theorem in \cite{wo} by means of the $L^\infty-$truncation method in \cite{bu}, and establish the existence of weak solution for $p-$potential fluids \eqref{1.1} with $p>\frac{8}{5}$ subject to the boundary conditions \eqref{1.4}.

The paper is organized as follows. In the next  section, after  introduce the notations and definitions, we will present some  auxiliary results and main theorem (see Theorem \ref{mresult}) in this paper.  In section 3, we set an approximated system and the approximated solution.  In section 4, we will  prove the main result in this paper  presenting  the existence of weak solution to the system \eqref{1.1} with the slip  boundary conditions \eqref{1.4} by $L^\infty-$truncation method. 
\section{Basic notations and statement of main result}
In this section, we will introduce function spaces and the definitions of $p-$potential function and  weak solution to system (1.1) with (1.5). We will also show the Korn-type inequalities for unbounded domain.
Let $M^{n\times n}$ be the vector space of all symmetric $n\times n$ matrices $\xi=(\xi_{ij})$. We equip $M^{n\times n}$ with scalar product $\xi:\eta=\sum_{i,j=1}^n \xi_{ij}\eta_{ij}$ and norm $|\xi|=(\xi:\eta)^{\frac{1}{2}}.$
\begin{Definition}
Let $p>1$ and let $F: \mathbb{R}_+\bigcup\{0\}\to\mathbb{R}_+\bigcup\{0\}$ be a convex function, which is $C^2$ on the $\mathbb{R}_+\bigcup\{0\}$, such that $F(0)=0,\,\ F'(0)=0.$ Assume that the induced function $\Phi:M^{n\times n}\to\mathbb{R}_+\bigcup\{0\}$, defined through $\Phi(B)=F(|B|),$ satisfies
\begin{alignat}{12}
&\sum_{jklm}(\p_{jk}\p_{lm}\Phi)(B)C_{jk}C_{lm}\ge\gamma_1(1+|B|^2)^{\frac{p-2}{2}}|C|^2,\label{2.1}\\
&|(\nabla_{n\times n}^2\Phi)(B)|\le\gamma_2(1+|B|^2)^{\frac{p-2}{2}}\label{2.2}
\end{alignat}
for all $B,C\in M^{n\times n}$ with constants $\gamma_1,\gamma_2>0.$ Such a function $F$, resp. $\Phi$, is called a $p-$\textbf{potential}.
\end{Definition}

We define the extra stress $S$ induced by $F$, resp. $\Phi,$ by
$$S(B)=\frac{\partial\Phi(B)}{\p B}=F'(|B|)\frac{B}{|B|}$$ for all $B\in M^{n\times n}\setminus\{ \mathbf{0}\}.$
From \eqref{2.1}, \eqref{2.2} and $F'(0)=0$, it is easy to know that $S$ can be continuously extended by $S(\mathbf{0})=\mathbf{0}$.

As in \cite{dien} and \cite{ma}, one can obtain from \eqref{2.1} and \eqref{2.2} the following properties of $S$.
\begin{Theorem}
There exist constants $c_1,c_2>0$ independent of $\gamma_1,\gamma_2$ such that for all $B,C\in M^{n\times n}$ there holds
\begin{alignat}{12}
&S(\mathbf{0})=\mathbf{0},\,\ \label{2.3}\\
&\sum_{i,j}S_{ij}(B)B_{ij}\ge c_1\gamma_1(1+|B|^2)^{\frac{p-2}{2}}|B|^2,\label{2.4}\\
&|S(B)|\le c_2\gamma_2(1+|B|^2)^{\frac{p-2}{2}}|B|.\label{2.5}
\end{alignat}
\end{Theorem}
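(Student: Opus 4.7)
The plan is to compute the Hessian of $\Phi(B)=F(|B|)$ directly and then read the desired bounds off (2.1) and (2.2) by choosing test matrices $C$ aligned with and orthogonal to $B$. For $B\neq \mathbf{0}$ a straightforward differentiation gives
\begin{equation*}
(\partial_{jk}\partial_{lm}\Phi)(B)=F''(|B|)\,\frac{B_{jk}B_{lm}}{|B|^{2}}+\frac{F'(|B|)}{|B|}\left(\delta_{jl}\delta_{km}-\frac{B_{jk}B_{lm}}{|B|^{2}}\right),
\end{equation*}
so that on the space of symmetric matrices the Hessian $\nabla^{2}\Phi(B)$ acts as $F''(|B|)$ on the direction of $B$ and as $F'(|B|)/|B|$ on its orthogonal complement.

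Next I would extract the two eigenvalues by inserting test matrices into (2.1) and (2.2). Taking $C=B$ in (2.1) kills the orthogonal part and yields $F''(|B|)|B|^{2}\geq\gamma_{1}(1+|B|^{2})^{(p-2)/2}|B|^{2}$, hence $F''(|B|)\geq\gamma_{1}(1+|B|^{2})^{(p-2)/2}$. Taking any $C$ with $B:C=0$ (which exists as soon as $n\geq 2$) kills the parallel part and yields $F'(|B|)/|B|\geq\gamma_{1}(1+|B|^{2})^{(p-2)/2}$. Bounding $|\nabla^{2}\Phi(B)|$ by its operator norm and using (2.2) produces the matching upper bounds $F''(|B|)\leq c\gamma_{2}(1+|B|^{2})^{(p-2)/2}$ and $F'(|B|)/|B|\leq c\gamma_{2}(1+|B|^{2})^{(p-2)/2}$ for an absolute constant $c$.

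With these pointwise bounds in hand, (2.4) and (2.5) follow in a single line from the formula $S(B)=(F'(|B|)/|B|)\,B$: indeed $S(B):B=(F'(|B|)/|B|)|B|^{2}$ and $|S(B)|=(F'(|B|)/|B|)|B|$, so the lower bound on $F'(|B|)/|B|$ gives (2.4) with $c_{1}=1$ and the upper bound gives (2.5) with $c_{2}=c$. For (2.3), continuity of the extension is immediate: since $F'(0)=0$ and $F'$ is continuous, $|S(B)|=F'(|B|)\to 0$ as $B\to\mathbf{0}$, so the value $S(\mathbf{0})=\mathbf{0}$ is consistent.

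I do not foresee a genuine obstacle; the whole argument is a direct chain-rule computation plus two well-chosen test matrices. The one subtlety worth double-checking is that $\nabla^{2}\Phi(B)$ acts on the \emph{symmetric} matrices $M^{n\times n}$, so the orthogonal projection onto the complement of $B$ should be taken within that space and the formula above written with the symmetric identity $\tfrac{1}{2}(\delta_{jl}\delta_{km}+\delta_{jm}\delta_{kl})$; this only affects the constants $c_{1},c_{2}$, not the structure of the estimate.
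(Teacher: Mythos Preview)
Your argument is correct and is essentially the standard derivation one finds in the references the paper cites: the paper itself does not give a proof of this theorem but simply refers to \cite{dien} and \cite{ma}, where the same chain-rule computation of $\nabla^{2}\Phi$ and the same splitting into the eigenvalues $F''(|B|)$ and $F'(|B|)/|B|$ are carried out. Your observation about working inside the space of symmetric matrices (replacing $\delta_{jl}\delta_{km}$ by its symmetrization) is exactly the only adjustment needed and, as you say, only affects the absolute constants.
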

Let $\Omega$ be an open set in $\mathbb{R}^3$ and $\phi:\mathbb{R}_+\to\mathbb{R}_+$ be an increasing continuous convex function that vanishes at zero. We define the Orlicz space  $L^\phi(\Omega)$ which is the Banach space  by
$$L^\phi(\Omega)=\left\{v~ \mbox{is a measurable function}: \int_{\Omega}\phi(|v|)\dif x<\infty\right\},$$
equipped with the norm
$$\|v\|_{L^\phi}=\inf\left\{\lambda>0:\int_{\Omega}\phi\left(\frac{|v|}{\lambda}\right)\dif x\le1\right\}.$$
The Sobolev-Orlicz space $W^{k,\phi}(\Omega)$ is also Banach space by
$$W^{k,\phi}(\Omega)=\{v\in L^\phi(\Omega):D^\alpha v\in L^\phi(\Omega),~ \forall \alpha,~ |\alpha|\le k\}$$ with the norm
$$\|v\|_{W^{k,\phi}}=\sum_{|\alpha|\le k}\|D^\alpha v\|_{L^\phi}.$$
Note that if $\phi(s)=s^p$ with some $p\in [1,\infty)$,  we write $L^p(\Omega)=L^\phi(\Omega)$  which corresponding to the standard one for Lebesgue space. $L^\infty(\Omega)$ and $W^{k,p}(\Omega)$ are defined by usual ways respectively. Let $\mathcal{D}(\Omega)$ be the set of functions $v(x)\in C_0^\infty(\Omega)$. we defined
$$D^{k,\phi}(\Omega)=\overline{\mathcal{D}(\Omega)}^{\|\cdot\|_{D^{k,\phi}}}~\mbox{with}~\|v\|_{D^{k,\phi}}=\|\nabla^k v\|_{L^\phi}.$$
Similarily, $D^{k,p}(\Omega)$ with $p\in [1,\infty)$ denotes the space $D^{k,\phi}(\Omega)$with $\phi(s)=s^p$. When $\Omega=\mathbb{R}^n_+$, set
\begin{equation*}
\begin{aligned}
&D^{1,\phi}_n(\mathbb{R}^3_+)=\{v\in D^{1,\phi}(\mathbb{R}^3_+):v(x)\to 
0~\mbox{as}~|x|\to\infty~\mbox{and}~v\cdot\hat{n}|_{x_3=0}=0\}\\
&H=\{v\in L^{2}(\mathbb{R}^3_+):\di v=0~\mbox{and}~v\cdot\hat{n}|_{x_3=0}=0\}\\
&H(\Omega)=\{v\in L^{2}(\Omega):\di v=0~\mbox{and}~v\cdot\hat{n}|_{\p\Omega}=0\}
\end{aligned}
\end{equation*}
In the rest of the paper, the functions $\phi,\psi:\mathbb{R}_+\to\mathbb{R}_+$ are defined as $\phi(s)=s^2(1+s^2)^{\frac{p-2}{2}}$ and $\psi=\min(s^6,s^{\frac{3p}{3-p}}).$

We list some  auxiliary lemmas needed in the following text. Some results need to deal with Orlicz spaces, we will frequently use the following results in Orlicz space.
\begin{Lemma}\label{l2.3}
	Let $p\in (1,3).$ There exists a constant $C>0$, such that 
	$$\|\mathbf{E} u\|_{D^{1,\phi}(\mathbb{R}^3)}\le C \|u\|_{D^{1,\phi}(\mathbb{R}^3_+)}$$
	for all $u\in D^{1,\phi}(\mathbb{R}^3_+)$, where $\mathbf{E}$ is the  extension operator. 
	\end{Lemma}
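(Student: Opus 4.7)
The plan is to use the even reflection extension across $\{x_3=0\}$,
$$(\mathbf{E}u)(x',x_3):=u(x',|x_3|),\qquad (x',x_3)\in\mathbb{R}^2\times\mathbb{R},$$
and to prove the stated norm bound first on the dense subspace $\mathcal{D}(\mathbb{R}^3_+)=C_0^\infty(\mathbb{R}^3_+)$, extending by continuity afterwards. For $u\in C_0^\infty(\mathbb{R}^3_+)$ the support is a compact subset of the open half space, so $u$ vanishes in a neighbourhood of the boundary plane and $\mathbf{E}u$ lies in $C_0^\infty(\mathbb{R}^3)$ with no matching condition required.

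For such $u$ a direct differentiation gives $(\p_j\mathbf{E}u)(x',x_3)=(\p_ju)(x',|x_3|)$ for $j=1,2$ and $(\p_3\mathbf{E}u)(x',x_3)=\operatorname{sgn}(x_3)(\p_3u)(x',|x_3|)$, yielding the pointwise identity $|\nabla\mathbf{E}u(x',x_3)|=|\nabla u(x',|x_3|)|$. Splitting into the two half spaces produces the modular identity
$$\int_{\mathbb{R}^3}\phi(|\nabla\mathbf{E}u|)\,\dif x=2\int_{\mathbb{R}^3_+}\phi(|\nabla u|)\,\dif x.$$
To convert this into an estimate for the Luxemburg norm, I would use that $\phi$ is convex with $\phi(0)=0$, which implies $\phi(t/C)\le C^{-1}\phi(t)$ whenever $C\ge 1$. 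Setting $\lambda:=\|\nabla u\|_{L^\phi(\mathbb{R}^3_+)}$ and choosing $C=2$,
$$\int_{\mathbb{R}^3}\phi\!\left(\frac{|\nabla\mathbf{E}u|}{2\lambda}\right)\dif x\le\tfrac12\int_{\mathbb{R}^3}\phi\!\left(\frac{|\nabla\mathbf{E}u|}{\lambda}\right)\dif x=\int_{\mathbb{R}^3_+}\phi\!\left(\frac{|\nabla u|}{\lambda}\right)\dif x\le 1,$$
so that $\|\nabla\mathbf{E}u\|_{L^\phi(\mathbb{R}^3)}\le 2\lambda$ by the very definition of $\|\cdot\|_{L^\phi}$, i.e.\ the desired bound with $C=2$ on the dense class.

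I do not anticipate any serious obstacle: the argument is the direct Orlicz analogue of the classical $W^{1,p}$ reflection extension, and linearity together with the completion-definition of $D^{1,\phi}(\mathbb{R}^3_+)$ lets the bound pass to the whole space by continuity. The only technical points are (i) the pointwise gradient formula for $\mathbf{E}u$, which is transparent on smooth compactly supported inputs, and (ii) the routine modular-to-Luxemburg conversion via convexity of $\phi$. The hypothesis $p\in(1,3)$ is not actually needed in this step; it is inherited from elsewhere in the paper, where $p>1$ is required to make $\phi$ a genuine convex $N$-function and $p<3$ enters through Sobolev-type embeddings of $D^{1,\phi}$ into $L^\psi$ with $\psi=\min(s^6,s^{3p/(3-p)})$.
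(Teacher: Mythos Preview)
Your argument is correct and carries out explicitly the reflection-type extension that the paper's proof merely gestures at via its one-line citation to Adams~\cite{ad}. The paper gives no further detail, so your modular identity together with the convexity step for passing to the Luxemburg norm is precisely the Orlicz-space adaptation that citation is meant to cover.
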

	\begin{proof}
		It is easy to obtain this conclusion by the idea in \cite{ad}.
	\end{proof}
\begin{Lemma}\label{l2.4}
		Let $p\in (1,3).$ There exists a constant $C>0$, such that 
		$$\|v\|_{L^\psi}\le C\|\nabla v\|_{L^\phi},$$
		for each $v\in D^{1,\phi}(\mathbb{R}^3_+)$.
	\end{Lemma}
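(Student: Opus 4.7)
The plan is a Sobolev--Orlicz embedding, proved by reducing to $\mathbb{R}^3$ and exploiting the two-scale behavior of $\phi$. First, I will apply Lemma \ref{l2.3} to extend $v$ to $\tilde v := \mathbf{E}v \in D^{1,\phi}(\mathbb{R}^3)$, so that $\|v\|_{L^\psi(\mathbb{R}^3_+)} \le \|\tilde v\|_{L^\psi(\mathbb{R}^3)}$ and $\|\nabla\tilde v\|_{L^\phi(\mathbb{R}^3)} \le C\|\nabla v\|_{L^\phi(\mathbb{R}^3_+)}$; by density of $\mathcal{D}(\mathbb{R}^3)$ in $D^{1,\phi}(\mathbb{R}^3)$ it then suffices to argue for $\tilde v\in C_c^\infty(\mathbb{R}^3)$, for which the Newton-potential identity gives the pointwise bound $|\tilde v(x)|\le C\,I_1(|\nabla\tilde v|)(x)$, where $I_1$ is the Riesz potential of order one in dimension three.

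The key structural feature is $\phi(s)\simeq s^2$ for $s\le 1$ and $\phi(s)\simeq s^p$ for $s\ge 1$, which invites a splitting of the gradient. Setting $g_1 := |\nabla\tilde v|\,\chi_{\{|\nabla\tilde v|\le 1\}}$ and $g_2 := |\nabla\tilde v|\,\chi_{\{|\nabla\tilde v|>1\}}$, the moment bounds $\int g_1^2\,dx,\ \int g_2^p\,dx \le C\int\phi(|\nabla\tilde v|)\,dx$ follow at once. The classical Hardy--Littlewood--Sobolev inequality, valid throughout $1<p<3$, then gives $\|I_1 g_1\|_{L^6}\le C\|g_1\|_{L^2}$ and $\|I_1 g_2\|_{L^{p^*}}\le C\|g_2\|_{L^p}$, with $p^*:=3p/(3-p)$. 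Since $\psi(s)\le s^6$ and $\psi(s)\le s^{p^*}$, distributing the bound $|\tilde v|\le C(I_1 g_1+I_1 g_2)$ between the two estimates produces the modular inequality $\int\psi(|\tilde v|)\,dx \le C\bigl[\|I_1 g_1\|_{L^6}^6 + \|I_1 g_2\|_{L^{p^*}}^{p^*}\bigr]$, bounded by a constant whenever $\int\phi(|\nabla\tilde v|)\,dx\le 1$.

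What remains is to pass from this modular estimate to the Luxemburg-norm inequality claimed in the lemma. For this I will use that $\psi$ satisfies the $\Delta_2$-condition: $\psi(s)=s^{\max(6,p^*)}$ on $[0,1]$ and $\psi(s)=s^{\min(6,p^*)}$ on $[1,\infty)$, so $\psi(s/\mu)\le \mu^{-\min(6,p^*)}\psi(s)$ for every $\mu\ge 1$. Choosing $\mu$ so that the resulting modular drops below unity, and combining with the homogeneity of the Luxemburg norm, yields $\|\tilde v\|_{L^\psi}\le C\|\nabla\tilde v\|_{L^\phi}$, from which the claim follows by the reduction in the first paragraph. I expect this final scaling step to be the main technical obstacle, because $\psi$ is not $r$-homogeneous and one must simultaneously track both power regimes when reducing the modular bound to unity.
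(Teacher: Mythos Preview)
Your proof is correct and follows the same outline as the paper: reduce to $\mathbb{R}^3$ via the extension Lemma~\ref{l2.3}, then invoke the Sobolev--Orlicz embedding on the whole space. The paper simply cites Lemma~3.1 of \cite{bu2} for that second step, whereas you supply a self-contained argument via the Riesz-potential representation, the large/small splitting of the gradient, Hardy--Littlewood--Sobolev, and the $\Delta_2$-property of $\psi$; this is essentially an unpacking of what the cited reference provides, so the approaches coincide.
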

\begin{proof}
	From Lemma \ref{l2.3} and Lemma 3.1 in \cite{bu2}, we can obtain this inequality.
	\end{proof}
	Combine  Lemma \ref{l2.3} and Lemma 3.2 in \cite{bu2}, one can conclude the following Korn's inequality.
\begin{Lemma}\label{l2.5}
	Let $1<p<2$. There is a constant $C>0$ such that for  all vector value functions $v\in D^{1,\phi}(\mathbb{R}^3_+)$ there holds
	$$\|\nabla v\|_{L^\phi}\le C\|D(v)\|_{L^\phi}.$$
	\end{Lemma}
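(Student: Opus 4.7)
The strategy is to reduce the half-space Korn inequality to the corresponding whole-space Orlicz--Korn estimate (Lemma 3.2 of \cite{bu2}) via a reflection extension across $\{x_3 = 0\}$. Given $v = (v_1, v_2, v_3) \in D^{1,\phi}(\mathbb{R}^3_+)$, I will work with the extension $\mathbf{E}v$ defined by even reflection of the tangential components $v_1, v_2$ and odd reflection of the normal component $v_3$; that $\mathbf{E}v \in D^{1,\phi}(\mathbb{R}^3)$ with the gradient norm controlled follows componentwise from Lemma \ref{l2.3}.

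The crucial observation, which motivates the odd reflection of $v_3$, is that this choice preserves the pointwise norm of the symmetric gradient after reflection. A direct componentwise chain-rule computation in $\{x_3 < 0\}$ yields
\begin{align*}
D_{\alpha\beta}(\mathbf{E}v)(x', x_3) &= D_{\alpha\beta}(v)(x', -x_3), \qquad \alpha, \beta \in \{1, 2\}, \\
D_{\alpha 3}(\mathbf{E}v)(x', x_3) &= -D_{\alpha 3}(v)(x', -x_3), \qquad \alpha \in \{1, 2\}, \\
D_{33}(\mathbf{E}v)(x', x_3) &= D_{33}(v)(x', -x_3),
\end{align*}
so that $|D(\mathbf{E}v)(x', x_3)| = |D(v)(x', -x_3)|$ almost everywhere in the lower half-space. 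Since $\phi$ satisfies the $\Delta_2$ condition for $p \in (1, 2]$, this modular identity transfers to the Luxemburg norm, giving $\|D(\mathbf{E}v)\|_{L^\phi(\mathbb{R}^3)} \le C \|D(v)\|_{L^\phi(\mathbb{R}^3_+)}$.

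Chaining the restriction bound $\|\nabla v\|_{L^\phi(\mathbb{R}^3_+)} \le \|\nabla \mathbf{E}v\|_{L^\phi(\mathbb{R}^3)}$, the whole-space Orlicz--Korn inequality of \cite{bu2} applied to $\mathbf{E}v$, and the modular estimate of the previous paragraph produces $\|\nabla v\|_{L^\phi(\mathbb{R}^3_+)} \le C\|D(v)\|_{L^\phi(\mathbb{R}^3_+)}$, which is the desired inequality.

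The main obstacle I expect is not the reflection itself but verifying that $\phi(s) = s^2(1+s^2)^{(p-2)/2}$ falls within the hypotheses of the whole-space Orlicz--Korn estimate in \cite{bu2}. That result ultimately rests on boundedness of singular-integral operators on $L^\phi$, requiring both $\Delta_2$ and $\nabla_2$ conditions on the Young function; these hold for $\phi$ in the range $p \in (1, 2)$ since $\phi$ behaves like $s^2$ near zero and like $s^p$ at infinity, but the verification must be traced carefully through the cited reference so that the constant $C$ depends only on $p$.
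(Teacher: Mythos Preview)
Your approach is correct and matches the paper's: the paper's proof is the single sentence ``Combine Lemma~\ref{l2.3} and Lemma~3.2 in \cite{bu2},'' i.e., extend to $\mathbb{R}^3$ and invoke the whole-space Orlicz--Korn inequality. Your explicit choice of the even/odd reflection and the pointwise identity $|D(\mathbf{E}v)(x',x_3)|=|D(v)(x',-x_3)|$ supplies precisely the detail the paper suppresses, namely why $\|D(\mathbf{E}v)\|_{L^\phi(\mathbb{R}^3)}$ is controlled by $\|D(v)\|_{L^\phi(\mathbb{R}^3_+)}$, which Lemma~\ref{l2.3} alone does not give.
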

	 As a consequence of Lemma \ref{l2.3}, Lemma \ref{l2.4} and Lemma \ref{l2.5}, it is easy to conclude the following interpolation result. 
\begin{Corollary}\label{c2.6}
	Assume that $q\in [2,\frac{5p}{3}]$ and $u$ satisfies
	$$\|u\|_{L^\infty(0,T;L^2(\mathbb{R}^3_+))}+\|D(u)\|_{L^p(0,T;L^\phi(\mathbb{R}^3_+))}\le K.$$
	Then there exists a constant $C$ independent of $u$ and $K$ such that
$$\|u\|_{L^q(0,T;L^q(\mathbb{R}^3_+))}\le CK.$$
\end{Corollary}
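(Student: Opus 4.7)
The plan is to chain Lemma \ref{l2.5} (Orlicz--Korn) and Lemma \ref{l2.4} (Orlicz--Sobolev) to obtain a spatial integrability bound on $u(t)$, and then to perform the standard parabolic Hölder interpolation against the $L^\infty(0,T;L^2)$ hypothesis. For a.e.\ $t\in(0,T)$, combining the two lemmas gives
$$\|u(t)\|_{L^\psi(\mathbb{R}^3_+)}\le C\|\nabla u(t)\|_{L^\phi(\mathbb{R}^3_+)}\le C\|D(u(t))\|_{L^\phi(\mathbb{R}^3_+)}.$$
Raising to the $p$-th power and integrating in $t$ yields $\|u\|_{L^p(0,T;L^\psi(\mathbb{R}^3_+))}\le CK$.

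The second step is to convert this Orlicz bound into a Lebesgue bound. Since $1<p<2$ forces $\tfrac{3p}{3-p}<6$, the function $\psi(s)=\min(s^6,s^{3p/(3-p)})$ agrees with $s^{3p/(3-p)}$ on $\{s\ge 1\}$ and with $s^6$ on $\{s\le 1\}$. Splitting $u$ along $\{|u|\le 1\}$ (where the $L^\infty(0,T;L^2)$ hypothesis bounds the $L^{3p/(3-p)}$ contribution since $\tfrac{3p}{3-p}\ge 2$) and $\{|u|>1\}$ (where the Luxemburg norm of $\psi$ controls $|u|^{3p/(3-p)}$ in a power-scale manner), I would derive
$$\|u\|_{L^p(0,T;L^{3p/(3-p)}(\mathbb{R}^3_+))}\le CK.$$
Finally, for $q\in[2,\tfrac{5p}{3}]$, set $\theta=(q-p)/q\in[0,1]$, which is exactly the exponent satisfying $\tfrac{1}{q}=\tfrac{\theta}{2}+\tfrac{(1-\theta)(3-p)}{3p}$. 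Hölder's inequality then gives the pointwise-in-time estimate
$$\|u(t)\|_{L^q(\mathbb{R}^3_+)}\le\|u(t)\|_{L^2(\mathbb{R}^3_+)}^{\theta}\|u(t)\|_{L^{3p/(3-p)}(\mathbb{R}^3_+)}^{1-\theta},$$
and the choice of $\theta$ is designed so that $q(1-\theta)=p$. Raising to the $q$-th power and integrating in time,
$$\int_0^T\|u(t)\|_{L^q}^q\,dt\le\|u\|_{L^\infty(0,T;L^2)}^{q\theta}\int_0^T\|u(t)\|_{L^{3p/(3-p)}}^{p}\,dt\le CK^{q\theta+p}=CK^{q},$$
since $q\theta+p=q$. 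The threshold $q=\tfrac{5p}{3}$ is exactly the maximal value for which such a $\theta$ exists.

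The main obstacle is the second step, namely the passage from the Orlicz bound on $u(t)$ to a genuine Lebesgue bound. This requires a careful analysis of the Luxemburg norm for $\psi$, in particular decoupling the ``large-value'' and ``small-value'' regions and using the $L^\infty(0,T;L^2)$ hypothesis to absorb the $L^6$-type contribution where $\psi$ fails to be power-like. Once this comparison is established the remainder of the argument is a routine space-time interpolation.
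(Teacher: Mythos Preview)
Your approach is exactly the one the paper intends: the paper gives no detailed proof and simply says the corollary follows from Lemmas \ref{l2.3}, \ref{l2.4}, \ref{l2.5}, i.e.\ Korn $+$ Orlicz--Sobolev followed by the standard space--time interpolation between $L^\infty_tL^2_x$ and $L^p_tL^{p^*}_x$ with $p^*=\tfrac{3p}{3-p}$.

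There is, however, an algebraic slip in your interpolation step. The two requirements you impose on $\theta$,
\[
\frac{1}{q}=\frac{\theta}{2}+\frac{(1-\theta)(3-p)}{3p}
\qquad\text{and}\qquad
q(1-\theta)=p,
\]
are \emph{not} simultaneously satisfiable for every $q\in[2,\tfrac{5p}{3}]$; in fact they are compatible only at the endpoint $q=\tfrac{5p}{3}$ (plug $\theta=(q-p)/q$ into the first identity and you get $3q=5p$). For general $q$ the spatial H\"older inequality forces $\theta$ to be determined by the first identity, which yields
\[
q(1-\theta)=\frac{3p(q-2)}{5p-6},
\]
and this quantity is $\le p$ precisely when $q\le\tfrac{5p}{3}$. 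With that correct $\theta$, you raise to the $q$-th power, pull out the $L^\infty_tL^2_x$ factor, and apply H\"older in $t$ on the finite interval $(0,T)$ to control $\int_0^T\|u(t)\|_{L^{p^*}}^{q(1-\theta)}\,dt$ by $\|u\|_{L^p_tL^{p^*}_x}^{q(1-\theta)}$. Alternatively, prove the endpoint $q=\tfrac{5p}{3}$ exactly as you wrote it (there your formula is correct) and then interpolate against $L^\infty_tL^2_x\hookrightarrow L^2(Q_T)$ to cover the intermediate $q$. Either route repairs the argument; the rest of your outline, including the Orlicz--to--Lebesgue passage via the splitting $\{|u|\le1\}\cup\{|u|>1\}$, is in line with the paper.
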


Next, we consider the following Laplace equation
\begin{equation}\label{Lequ}\left\{
\begin{aligned}
&\Delta u=f, \, \, \, &&\mbox{in}\,\, \mathbb{R}^3_+;\\
&\frac{\p u}{\p\hat{ n}}=0, \, \, \, &&\mbox{on}\,\, \{x_3=0\},\\
&u(x)\to 0, \,\, \, &&\mbox{as}\,\, |x|\to\infty
\end{aligned}\right.
\end{equation}
then there exists a unique solution given by  for any $x\in\mathbb{R}^3_+$
\begin{equation}\label{uform}
	u(x)=\int_{\mathbb{R}^3_+}\left(\frac{1}{|x-y|}-\frac{1}{|x-y^*|}
	\right)f\dif y
\end{equation}

with $y^*=(y_1,y_2,-y_3)$ for $y=(y_1,y_2,y_3)\in\mathbb{R}^3_+.$ 

Using the Calder\'{o}n-Zygmund singular integral operator theory, we conclude that for all $q\in (1,\infty)$
\begin{alignat}{12}
&\|\nabla^2 u\|_{L^q}\le K\|f\|_{L^q},\label{2.7}\\
&\|\nabla u\|_{L^q}\le K\|v\|_{L^q}~\mbox{with}~f=\di v,\label{2.8}\\
&\| u\|_{L^q}\le K\|S\|_{L^q}~\mbox{with}~f=\di\di S.\label{2.9}
\end{alignat}

For any given $R>0$, set $\Omega=\{x\in\mathbb{R}^3_+: |x|\le R\}$. Denote by $\p\Omega_1=\{x\in\mathbb{R}^3_+:|x|=R\}$ and $\p\Omega_2=\p\Omega\cap\{x_3=0\}.$ 
The following proposition shows that there exists a basis in $ W^{2,2}$ with the boundary conditions \eqref{1.4}.
\begin{Proposition} \label{pro2.7}The eigenvalue problem
	\begin{equation}\label{eigen2.8}
	\left\{\begin{aligned} &-\Delta u+\nabla p=\lambda u, &&\mbox{in}~~\Omega \\[3mm]
	&\nabla\cdot u = 0, &&\mbox{in}~~\Omega, \\[2mm]
	&u=0    &&\mbox{on}~~\p\Omega_1,\\
	&u_3=0,~~~\frac{\p u_1}{\p x_3}=\frac{\p u_2}{\p x_3}=0  &&\mbox{on}~~\p\Omega_2,
	\end{aligned}\right.
	\end{equation}
	$\lambda \in\mathbb{R},\,\, u\in D^{1,2}(\Omega)\cap H(\Omega) $ admits a denumberable positive eigenvalue $\{\lambda_i\}$ clustering at infinity. Moreover, the corresponding eigenfunctions $\{\omega_i\}$ are in $W^{2,2}(\Omega),$ and associate pressure fields $p_i\in W^{1,2}(\Omega)$. Finally, $\{\omega_i\}$ are orthogonal and complete in $W^{2,2}(\Omega)\cap H(\Omega).$ Furthermore, for any $\Omega'\subset\subset \Omega$,  $\omega_i\in W^{3,2}(\Omega'\cup \p\Omega_2).$
\end{Proposition}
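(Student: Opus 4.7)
The plan is to reduce the mixed boundary value problem on the half-ball $\Omega$ to the classical Stokes eigenvalue problem on the full ball $B_R=\{|x|\le R\}$ by the standard mirror reflection across $\{x_3=0\}$, and then invoke compact self-adjoint operator theory.

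First, I would define the extension map $\mathbf{E}$ by even reflection in the tangential components and odd reflection in the normal component, that is, for $u$ defined on $\Omega$ set
\[
(\mathbf{E} u)_i(x',-x_3)=u_i(x',x_3)\ (i=1,2),\qquad (\mathbf{E} u)_3(x',-x_3)=-u_3(x',x_3).
\]
For pressure, I would extend evenly. A direct check shows: (i) if $u\in W^{1,2}(\Omega)$ with $u_3|_{x_3=0}=0$ and slip conditions on tangential components, then $\mathbf{E} u\in W^{1,2}(B_R)$; (ii) divergence-free is preserved; (iii) the Dirichlet condition $u=0$ on $\partial\Omega_1$ becomes $\mathbf{E} u=0$ on the smooth sphere $\partial B_R$ (using that $\partial B_R$ is symmetric under $x_3\mapsto-x_3$ and meets $\{x_3=0\}$ orthogonally); (iv) the weak form of $-\Delta u+\nabla p=\lambda u$ on $\Omega$ is equivalent, for symmetric test functions, to the classical Stokes weak form on $B_R$.

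Next, on $B_R$ I would apply the classical theory: the space $V=W^{1,2}_0(B_R)\cap H(B_R)$ is a Hilbert space under $(u,v)\mapsto\int_{B_R}\nabla u:\nabla v$ (by Poincar\'e), and Lax--Milgram gives a bounded solution operator $T:H(B_R)\to V\hookrightarrow H(B_R)$ for the Stokes system. The operator $T$ is self-adjoint and positive, and compact by the Rellich embedding $V\hookrightarrow L^2(B_R)$. The Hilbert--Schmidt theorem then yields a complete orthonormal basis $\{\tilde\omega_i\}\subset H(B_R)$ of eigenfunctions with eigenvalues $\mu_i\to 0^+$, and $\lambda_i=1/\mu_i$ form the desired clustering-at-infinity sequence. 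Since the reflection symmetry commutes with the Stokes operator and with $T$, the symmetric subspace (even in tangential, odd in normal components) is invariant; restricting the eigenfunctions to this closed subspace and applying the spectral theorem there yields a complete orthonormal system of symmetric eigenfunctions, which restrict back to $\Omega$ as the required $\{\omega_i\}$. Completeness in $W^{2,2}(\Omega)\cap H(\Omega)$ follows from density plus standard bootstrapping. Pressures $p_i\in W^{1,2}$ are recovered by de Rham's theorem.

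For regularity, standard Stokes elliptic regularity on $B_R$ (whose boundary $\partial B_R$ is smooth) gives $\tilde\omega_i\in W^{2,2}(B_R)$ and $\tilde p_i\in W^{1,2}(B_R)$, hence $\omega_i\in W^{2,2}(\Omega)$ and $p_i\in W^{1,2}(\Omega)$. For the interior-type improvement $\omega_i\in W^{3,2}(\Omega'\cup\partial\Omega_2)$ with $\Omega'\subset\subset\Omega$, one observes that $\Omega'\cup\partial\Omega_2$ lies in the interior of the reflected ball $B_R$ (a strip around $\{x_3=0\}$ reflected into $B_R$ stays away from $\partial B_R$), so interior Stokes regularity of order three applies to $\mathbf{E}\omega_i$ and then restricts to $\omega_i$.

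The main obstacle I anticipate is not the spectral theorem itself but justifying the reflection rigorously, in particular verifying that (a) the slip boundary conditions are exactly what is needed to guarantee $\mathbf{E} u\in W^{1,2}(B_R)$ with $\nabla\cdot(\mathbf{E} u)=0$ (the even/odd choice must be consistent with each derivative so no jumps appear across $\{x_3=0\}$), and (b) the orthogonality of the corner $\partial\Omega_1\cap\partial\Omega_2$ is genuinely used to keep $\partial B_R$ smooth after reflection (this is what avoids corner singularities that would otherwise block $W^{2,2}$ regularity). Once the reflection is set up cleanly, the rest of the proof is a routine application of the classical Stokes spectral theorem and elliptic regularity.
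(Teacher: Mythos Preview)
Your reflection argument is correct and complete in outline; the even/odd extension preserves the divergence constraint, turns the mixed boundary conditions on the half-ball into the pure Dirichlet condition on the smooth sphere $\partial B_R$, and the Stokes operator commutes with the reflection so that the symmetric subspace carries its own spectral resolution. The $W^{3,2}$ regularity up to $\partial\Omega_2$ then becomes genuine interior regularity in $B_R$, exactly as you say.

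This is, however, a different route from the paper's. The paper does not reflect at all: it simply quotes the a~priori $W^{2,2}$ estimates for the Stokes system with slip conditions due to Maremonti (Theorem~3.2 in \cite{mar}), combines them with the Rellich compactness theorem to obtain a compact self-adjoint solution operator on $H(\Omega)$, and reads off the spectrum and the regularity directly. Your approach is more self-contained and elementary---it reduces everything to the textbook Dirichlet Stokes eigenvalue problem on a smooth bounded domain and requires no slip-specific regularity theory---while the paper's approach is shorter because it outsources the hard analysis to an existing reference tailored to these boundary conditions. Both are valid; your version has the advantage of making the $W^{3,2}(\Omega'\cup\partial\Omega_2)$ claim transparent (it is just interior smoothness in $B_R$), whereas in the paper's framework that last statement still needs a separate local argument near the flat boundary.
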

\begin{proof}
	We can infer this conclusion from the estimates of Theorem 3.2 in \cite{mar} and the Rellich theorem. Then it is easy to obtain the estimates of regularity.
	\end{proof}

 Now, we state the  main result as follows. If $q\in (1,\frac{5p}{6}]$, let $U^*$ be the dual space of $$U=L^p(0,T;D^{1,\phi}(\mathbb{R}^3_+))\cap L^{q^\prime}(0,T;D^{1,q^\prime}(\mathbb{R}^3_+)).$$
\begin{Theorem}\label{mresult}
Let $\frac{8}{5}< p\le2,$ $\phi(s)=s^2(1+s^2)^{\frac{p-2}{2}}$ and $S(D(u))$ be induced by the $p-$potential of Definition  2.1. Let  $f\in L^2(0,T; H) $ or $f\in U^*$,  and $u_0\in H$ with $\nabla\cdot u_0=0$ in the sense of distribution. Then there exists  a vector function $$u\in L^\infty(0,T;H)\cap L^p(0,T; D^{1,\phi}(\mathbb{R}^3_+)) ~\mbox{with}~u_t\in U^* $$  satisfies the following identity
\begin{equation}\label{2.30}
\begin{aligned}
&\int_{0}^T \langle u_t,\psi\rangle\dif t+&\int_{Q_T}(S(x,t,D(u))-u\otimes u):D(\psi)\dif x\dif t=\int_{Q_T} f\cdot\psi \dif x\dif t
\end{aligned}
\end{equation}
holds for all $\psi\in U$ with $\di\psi=0,\,\, \psi_3|_{x_3=0}=0$, where $Q_T=\mathbb{R}^3_+\times (0,T).$
\end{Theorem}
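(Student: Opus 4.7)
I will construct weak solutions by a three-parameter approximation scheme, with parameters $(\epsilon,R,N)$: $\epsilon>0$ regularises the convective term (writing $u^\epsilon\cdot\nabla u$ as in \eqref{1.5}), $R>0$ truncates the half-space to the semi-ball $\Omega_R=\{x\in\mathbb{R}^3_+:|x|<R\}$, and $N$ indexes the Galerkin dimension built from the basis $\{\omega_i\}$ of Proposition \ref{pro2.7}. The limits are taken in the order $N\to\infty$, then $R\to\infty$, and finally $\epsilon\to 0$.

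For the Galerkin stage, I would project \eqref{1.5} onto $\mathrm{span}\{\omega_1,\dots,\omega_N\}$ and solve the resulting ODE system by Carath\'eodory theory. Testing the Galerkin equations with $u^{N,R,\epsilon}$ itself makes the convective contribution vanish, since $\di u^\epsilon=0$ and the basis encodes the slip condition $u_3=0$ on $\{x_3=0\}$. This gives the a priori bound
\[
\tfrac12\|u^{N,R,\epsilon}(t)\|_{L^2(\Omega_R)}^2+c_1\gamma_1\int_0^t\!\!\int_{\Omega_R}\phi(|D(u^{N,R,\epsilon})|)\,\dif x\,\dif s\le C,
\]
uniform in $(N,R,\epsilon)$, which combined with Lemma \ref{l2.5} yields a bound in $L^p(0,T;D^{1,\phi}(\Omega_R))$; the time derivative $\p_t u^{N,R,\epsilon}$ is bounded in the corresponding dual. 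Aubin--Lions compactness gives strong $L^2_{\mathrm{loc}}$ convergence in $N$, and since the convective factor $u^\epsilon$ is smooth by regularisation, the classical Minty trick identifies the limit of $S(D(u^{N,R,\epsilon}))$, producing a weak solution $u^{R,\epsilon}$ on $\Omega_R$.

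Extending by zero outside $\Omega_R$ and exploiting the $R$-independence of the energy estimate, I would extract a weak limit $u^\epsilon\in L^\infty(0,T;H)\cap L^p(0,T;D^{1,\phi}_n(\mathbb{R}^3_+))$ solving \eqref{1.5} weakly on $\mathbb{R}^3_+$; the slip condition survives because each $u^{R,\epsilon}$ verifies it. Corollary \ref{c2.6} then places $u^\epsilon\in L^{5p/3}(Q_T)$, so that the convection $u\otimes u$ retains meaning in the final limit. In the limit $\epsilon\to 0$, weak limits $u^\epsilon\rightharpoonup u$ and $S(D(u^\epsilon))\rightharpoonup \bar S$ are clear, and Aubin--Lions upgrades $u^\epsilon\to u$ strongly in $L^q_{\mathrm{loc}}$ for any $q<5p/3$. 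The remaining, and decisive, task is to identify $\bar S=S(D(u))$.

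This identification is the main obstacle. Because $p<2$, there is no uniform $L^2$ bound on $D(u^\epsilon)$, so Minty's trick cannot be invoked by directly testing with $u^\epsilon-u$, nor is that difference an admissible divergence-free test function. The remedy is the $L^\infty$-truncation construction of Bul\'{\i}\v{c}ek, M\'alek and Rajagopal in \cite{bu}, adapted to the half-space: for each level $\lambda>0$ one builds a divergence-free, pressure-corrected truncation that coincides with $u^\epsilon-u$ on $\{|u^\epsilon-u|\le\lambda\}$ and stays uniformly bounded in $L^\infty$. Inserting these truncated test functions into the weak form of \eqref{1.5} and invoking the local Minty lemma from \cite{wo}, one obtains
\[
\limsup_{\epsilon\to 0}\int_K\bigl(S(D(u^\epsilon))-S(D(u))\bigr):\bigl(D(u^\epsilon)-D(u)\bigr)\,\dif x\,\dif t\longrightarrow 0 \quad (\lambda\to\infty)
\]
on every compact $K\subset Q_T$. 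Monotonicity \eqref{2.1} together with Vitali's theorem then force $D(u^\epsilon)\to D(u)$ almost everywhere, whence $\bar S=S(D(u))$. The restriction $p>8/5$ enters precisely at this step: Corollary \ref{c2.6} gives $u\otimes u\in L^{5p/6}_{\mathrm{loc}}$, and $p>8/5$ is the sharp threshold making this integrability compatible both with the $L^\infty$-truncation scheme and with the required duality against $L^p(0,T;D^{1,\phi})$ test fields.
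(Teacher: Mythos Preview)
Your three-level scheme $(\epsilon,R,N)$ and the use of the $L^\infty$-truncation/local Minty argument for the final limit $\epsilon\to 0$ match the paper's strategy closely. The gap is in the middle step $R\to\infty$.

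You write that after extending $u^{R,\epsilon}$ by zero and invoking the $R$-uniform energy bound you ``extract a weak limit $u^\epsilon$ \dots\ solving \eqref{1.5} weakly on $\mathbb{R}^3_+$''. But a weak limit of $S(D(u^{R,\epsilon}))$ is a priori only some tensor $G$, and you give no argument that $G=S(D(u^\epsilon))$. The classical Minty trick you used for $N\to\infty$ does \emph{not} transfer here: on the unbounded domain the energy bound only places $D(u^\epsilon)$ in $L^\phi$ (behaving like $L^p$ at infinity), not in $L^2$, so the limit $u^\epsilon$ is not an admissible test function against its own time derivative $u^\epsilon_t\in L^2(0,T;(D^{1,2})^*)$, and the energy comparison that drives Minty breaks down. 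The strong convergence from Aubin--Lions is only in $L^q_{\mathrm{loc}}$, which does not help with the global energy identity either.

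The paper closes this gap by going back to the Galerkin level and testing \eqref{ode4.1} with $-\mathrm{curl}(\eta^2\,\mathrm{curl}\,u_m)$, where $\eta$ is a cut-off supported in $\Omega_R$. A careful computation using the slip conditions (to kill the boundary integral $I_1$) and the $p$-potential structure \eqref{2.1} yields a weighted second-order estimate $\int_0^T I_\eta(u_m)(1+\|\eta\,\mathrm{curl}\,u_m\|_2^2)^{-\lambda}\,\dif t\le C$ uniformly in $m,R$, which is then upgraded to a fractional bound $\int_0^T\|u^R\|_{W^{1+\sigma,p}(\Omega')}^p\le C$ on every bounded $\Omega'$. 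Compact embedding then gives $D(u^{R_k})\to D(u^\epsilon)$ a.e., which identifies $G=S(D(u^\epsilon))$ directly. This curl--curl testing, and the verification that the boundary terms on $\{x_3=0\}$ vanish, is the technical heart of Section~3 and is entirely absent from your proposal.

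A minor remark on the final step: in the paper (following \cite{bu}) the truncation level $L$ is chosen \emph{small} (of order $\eta/K$), not sent to infinity, and the test function is additionally corrected by a Newtonian potential $\nabla z^k$ and localised by cut-offs $\tau\zeta$; the residual divergence $\tau\nabla\zeta\cdot(\psi^k-\nabla z^k)$ is then handled via the pressure decomposition $\pi=\pi_1+\pi_2$ of Proposition~4.1. Your phrase ``pressure-corrected truncation'' points in the right direction but understates how much of the work sits in controlling these correction and pressure terms.
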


\section{Construction of the approximation solutions}
In this section, for brevity, assume that $f=0$, we will set the approximation system and hunt for the solutions.
 Given $u\in L^2(\Omega)$, where $\Omega$ is an open proper subset  of $\mathbb{R}^3$ with smooth boundary, define $u^\epsilon$ by 
 \begin{equation}\label{v3.1}
 u^\epsilon=J_\epsilon u:= P \mathbf{E}^\prime\mathcal{F}_\epsilon \mathbf{E} u 
  \end{equation} 
  where the symbol $\mathbf{E}$ is an extension operator from $L^2(\Omega)$ to $L^2(\mathbb{R}^3)$, $\mathcal{F}_\epsilon$ is the standard mollifier operator, $\mathbf{E}^\prime$ is the restriction from $L^2(\mathbb{R}^3)$ to $L^2(\Omega)$, and  $P$ is the Leray projector from $L^2(\Omega)$ to $H(\Omega)$ \cite{kato}.
  Hence it is easy to see that 
  \begin{alignat}{12}
  &u^\epsilon\to u~\mbox{in}~ D^{1,\phi}(\Omega)\cap H(\Omega)~\mbox{as}~\epsilon\to 0,\label{v3.2}\\
  &\|u^\epsilon\|_{W^{2,2}}\le \frac{C}{\epsilon^2}\|u\|_{L^2} \label{v3.3}
  \end{alignat}
  where $C$ is independent of $\epsilon.$
  
  Now we investigate the following approximation system 
   \begin{equation}\label{v3.4}
   \left\{\begin{aligned} &\partial_t u-\di S(D(u))+(u^\epsilon\cdot\nabla)u+\nabla \pi=0, &\mbox{in}~~Q_T \\[3mm]
   &\nabla\cdot u = 0, &\mbox{in}~~Q_T \\[2mm]
   &u|_{t=0}=u_0(x),& \mbox{in}~~\mathbb{R}^3_+,\\
   &u_3|_{x_3=0}=\left.\frac{\p u_i}{\p x_3}\right|_{x_3=0}=0\,\,(i=1,2),\\
   &u(x)\to 0,~\mbox{as}~|x|\to\infty.
   \end{aligned}\right.
   \end{equation}
   where $u^\epsilon$ is given in \eqref{v3.1}.
\begin{Theorem}
Let $u_0\in  H$ with boundary conditions \eqref{1.4} in the distribution sense and $S(D(u))$ be induced by a $p-$potential from Definition 2.1 with $\frac{6}{5}<p<2$. Then there exists a weak solution $u\in L^p(0,T; D^{1,\phi}(\mathbb{R}^n_+))\cap L^\infty (0,T;H)$ to the  system \eqref{v3.4} in the following sense
\begin{equation}\label{v3.5}
\begin{aligned}
&\int_{0}^T \langle u_t,\psi\rangle\dif t+&\int_{Q_T}(S(x,t,D(u))-u^\epsilon\otimes u):D(\psi)\dif x\dif t=0
\end{aligned}
\end{equation}
holds for all $\psi\in U$ with $\di\phi=0,\,\, \psi_3|_{x_3=0}=0$, where $Q_T=\mathbb{R}^3_+\times (0,T).$
\end{Theorem}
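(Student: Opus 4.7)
The plan is to prove existence for the approximation system \eqref{v3.4} by a double limiting procedure: first Galerkin approximation on an expanding family of semi-balls, then passage to the whole half space. Fix $R>0$ and set $\Omega_R = \{x \in \mathbb{R}^3_+ : |x| < R\}$. On $\Omega_R$, Proposition \ref{pro2.7} furnishes a basis $\{\omega_i\}$ of $W^{2,2}(\Omega_R)\cap H(\Omega_R)$ respecting the slip boundary conditions \eqref{1.4} on $\partial\Omega_2$ and the no-slip condition on $\partial\Omega_1$. Seek $u^N(t,x) = \sum_{i=1}^N c_i^N(t)\omega_i(x)$ satisfying the Galerkin system
\begin{equation*}
\frac{d}{dt}(u^N,\omega_j) + \int_{\Omega_R} S(D(u^N)):D(\omega_j)\,\dif x + \int_{\Omega_R} (u^{N,\epsilon}\cdot\nabla)u^N\cdot\omega_j\,\dif x = 0,\quad j=1,\ldots,N,
\end{equation*}
together with initial data $u^N(0) = \sum_{i\le N}(u_0,\omega_i)\omega_i$. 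The regularization bound \eqref{v3.3} shows that $u^{N,\epsilon}$ is bounded in $W^{2,2}$ in terms of $\|u^N\|_{L^2}$, so the convection term is Lipschitz continuous in the coefficient vector $(c_i^N)$, and Carath\'eodory theory yields a local-in-time solution, globalized by the a priori estimates below.

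The key energy estimate follows by testing with $u^N$ itself. The crucial observation is that the regularized convection term vanishes: since $u^{N,\epsilon} = J_\epsilon u^N = P\mathbf{E}'\mathcal{F}_\epsilon \mathbf{E} u^N$ is divergence-free and satisfies $u_3^{N,\epsilon}|_{x_3=0}=0$, integration by parts gives $\int_{\Omega_R}(u^{N,\epsilon}\cdot\nabla)u^N\cdot u^N\,\dif x = 0$. Combined with the coercivity property \eqref{2.4} of the $p$-potential, this yields
\begin{equation*}
\tfrac{1}{2}\tfrac{d}{dt}\|u^N\|_{L^2}^2 + c_1\gamma_1\int_{\Omega_R}(1+|D(u^N)|^2)^{\frac{p-2}{2}}|D(u^N)|^2\,\dif x \le 0,
\end{equation*}
which gives bounds on $u^N$ in $L^\infty(0,T;L^2)\cap L^p(0,T;D^{1,\phi})$ that are uniform in both $N$ and $R$. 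Using \eqref{2.5} and the smoothing estimate on $J_\epsilon$, one then bounds $\partial_t u^N$ in a suitable dual space, setting up Aubin--Lions compactness.

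For fixed $R$, extract weak limits $u^N \rightharpoonup u_R$ in $L^p(0,T;D^{1,\phi}(\Omega_R))$ and weak-$*$ in $L^\infty(0,T;L^2)$, and strong convergence in $L^2(0,T;L^2(\Omega_R))$ by Aubin--Lions. The mollification $J_\epsilon$ sends weak $L^2$-convergence to strong $W^{2,2}$-convergence, so $u^{N,\epsilon}\otimes u^N$ converges strongly enough to pass to the limit in the convection term. To identify the weak limit $\chi$ of $S(D(u^N))$ with $S(D(u_R))$, one applies Minty's trick using the monotonicity of $S$ (derivable from \eqref{2.1}); the hypothesis $\int_0^T\!\int S(D(u^N)):D(u^N)\,\dif x\,\dif t \to \int_0^T\!\int \chi:D(u_R)\,\dif x\,\dif t$ follows by comparing the Galerkin energy equality with the limit equation, using the strong $L^2$ convergence of $u^N(T)$. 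Finally, extending $u_R$ by zero to $\mathbb{R}^3_+$, the uniform estimates persist, and a second passage $R\to\infty$—with Minty's trick applied once more in the unbounded setting and local compactness supplied by Aubin--Lions on each bounded subdomain—produces the desired weak solution $u$. The principal obstacle is the second step of this argument: since $D^{1,\phi}(\mathbb{R}^3_+)$ does not embed compactly, one must carefully localize the Aubin--Lions argument and verify that the energy equality survives in the whole-space limit, so that the Minty reconstruction of $S(D(u))$ still applies globally.
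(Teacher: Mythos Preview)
Your Galerkin setup on $\Omega_R$ and the first Minty passage $N\to\infty$ essentially match the paper, though your appeal to ``strong $L^2$ convergence of $u^N(T)$'' is not how the paper closes that step: it instead subtracts the limit equation tested with $u_m$ from the Galerkin equation and passes to the limit term by term, cf.\ \eqref{v4.9}--\eqref{v4.11}, avoiding any claim about convergence of terminal data.

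The substantive divergence is in the passage $R\to\infty$. You propose to repeat Minty's trick, relying on an energy equality for the whole-space limit $u$. The paper does \emph{not} do this. Instead it returns to the Galerkin level and tests \eqref{ode4.1} with $-\C(\eta^2\C u_m)$ for a radial cutoff $\eta$; after checking that all boundary contributions on $\{x_3=0\}$ vanish thanks to the slip condition \eqref{1.4} (see \eqref{bdry4.9}, \eqref{s4.13}), this yields second-order estimates \eqref{v4.19}--\eqref{v4.20} of the form $\int_0^T\|u_m^R\|_{W^{1+\sigma,p}(\Omega')}^p\le C$ uniformly in $R$ and $m$, on every bounded $\Omega'$. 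Compact embedding of $W^{1+\sigma,p}$ into $W^{1,p}$ then gives $D(u^{R_k})\to D(u)$ a.e., and the identification $G=S(D(u))$ follows directly, bypassing Minty entirely at this stage.

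Your route has a genuine gap precisely where you flag it. To run Minty at $R\to\infty$ you need $\limsup_R\int S(D(u^R)){:}D(u^R)\le\int G{:}D(u)$, which requires testing the limit equation with $u$ itself and invoking the chain rule $\int_0^T\langle u_t,u\rangle=\tfrac12\|u(T)\|^2-\tfrac12\|u_0\|^2$. On $\mathbb{R}^3_+$ you only know $\nabla u\in L^p(0,T;L^\phi)$, and since $L^\phi\not\subset L^2$ on an unbounded domain, $u$ does not obviously lie in the space $L^2(0,T;D^{1,2})$ that pairs with the available dual bound \eqref{ut4.12} on $u_t$. One can probably repair this for fixed $\epsilon$ by exploiting $S(D(u))\in L^2\cap L^{p'}$ and the Orlicz duality $L^2\cap L^{p'}\subset (L^\phi)^*$, together with $u^\epsilon\in L^\infty$, to build a Lions--Magenes framework with $X=D^{1,\phi}$; but this must be written out, and the convection pairing is delicate for $p$ near $6/5$, which is within the range claimed by the theorem. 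The paper's curl--curl argument sidesteps all of this functional-analytic difficulty, at the price of the computations \eqref{s4.11}--\eqref{v4.18}.
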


\begin{proof}
The proof is based on standard arguments for approximation of  a solution by the Galerkin method. Let $\{\Omega_R\}$ be a sequence of  bounded domains expanding in $\mathbb{R}^3_+$,  such that $\Omega_R=\{x\in\mathbb{R}^3_+: |x|\le R\}$ with $R\to\infty$, and $$\mathbb{R}^3_+=\bigcup_{R=1}^\infty\Omega_R.$$

Now we consider the following problem in $\Omega_R$,

 \begin{equation}\label{appro3.4}
 \left\{\begin{aligned} &\partial_t u_R-\di S(D(u_R))+(u_R^\epsilon\cdot\nabla)u_R+\nabla \pi=0, &\mbox{in}~~\Omega_R\times [0,T] \\[3mm]
 &\nabla\cdot u_R = 0, &\mbox{in}~~\Omega_R\times [0,T] \\[2mm]
 &u_R|_{t=0}=u_{0,R}(x),& \mbox{in}~~\Omega_R,\\
 &\left.u_R\right|_{|x|=R}=u_{R,3}|_{x_3=0}=\left.\frac{\p u_{R,i}}{\p x_3}\right|_{x_3=0}=0\,\,(i=1,2).
 \end{aligned}\right.
 \end{equation}
 where both $u_R^\epsilon$ given by \eqref{v3.1} and $u_{0,R}=P \mathbf{E}^\prime u_0$ with respect to the domain $\Omega_R$.  For simplicity we omit the subscript $R$ for $u_R(x,t)$. In $\Omega_R$ we consider the eigenfunctions $\{\omega_i(x)\}_{i\in\mathbb{N}}$ of the problem \eqref{eigen2.8}, and put
 $$u_m(x,t)=\sum_{k=1}^{m} c_{k,m}(t)\omega_k(x).$$ 
 Denote $\omega_k^\epsilon(x)=J_\epsilon \omega_k(x)$ for all $k=1,2,\cdots,$ and one yields $c_{k,m}(t)$ solves the following system of ordinary differential equations
 \begin{equation}\label{ode4.1}
 \begin{aligned}
 ( \p_tu_m,\omega_k)+(S(D(u_m))-u_m^\epsilon\otimes u_m,D( \omega_k))=0,~k=1,\cdots, m.
 \end{aligned}
 \end{equation}
 Due to the continuity of $S$, the local-in-time existence  follows from Caratheodory theory.  The global-in-time existence will be established by the following {\it apriori} estimates.

Multiply the equations \eqref{ode4.1} by $c_{k,m}$, then sum over $k$ and integrate on $(0,t).$ It is easy to obtain
\begin{equation}\label{v4.2}
\begin{aligned}
\|u_m\|^2_{L^2(\Omega_R)}+2\int_0^t(S(D(u_m)),D (u_m))=\|u_0\|^2_{L^2(\Omega_R)}.
\end{aligned}
\end{equation}
Hence
\begin{equation}\label{v4.3}
\begin{aligned}
\sup_{0\le t\le T}\|u_m(t)\|^2_{L^2(\Omega_R)}+2\int_{0}^{t}\|u_m(\tau)\|^p_{D^{1,p}(\Omega_R)}\dif \tau\le C(R,u_0).
\end{aligned}
\end{equation} 

From \eqref{ode4.1}, \eqref{v4.3} and $\{\omega_i(x)\}_{i\in\mathbb{N}}$ is dense in $D^{1,p}(\Omega_R)\cap H(\Omega_R)$, one infers that 
\begin{equation}\label{es4.3}
\|u'_m\|_{L^{p'}(0,T;(D^{1,p}(\Omega_R)\cap H(\Omega_R))^*)}\le C(R,u_0).
\end{equation}
By Aubin-Lions Lemma, it follows, as $m\to\infty$
\begin{alignat}{12}
&u'_m\rightharpoonup u',\,\, &&~\mbox{weakly\,\, in}~ L^{p'}(0,T;(D^{1,p}(\Omega_R)\cap H(\Omega_R))^*)\label{v4.4};\\
&u_m \stackrel{*}{\rightharpoonup} u,\,\, &&~\mbox{weakly}^* \mbox{\,\, in}~ L^{\infty}(0,T;H(\Omega_R))\label{v4.5};\\
&u_m \rightharpoonup u,\,\,&&~\mbox{weakly\,\, in}~ L^p(0,T;D^{1,p}(\Omega_R)))\label{v4.6};\\
&u_m\longrightarrow u,\,\,&&~\mbox{strongly\,\, in}~ L^q(0,T;L^q(\Omega_R))\,~ q\in [1,\frac{5p}{3})\label{v4.7};\\
&S(D(u_m))\rightharpoonup \tilde{S}^R,\,\,&&~\mbox{weakly\,\, in}~ L^{p'}(\Omega_R\times(0,T)) \label{v4.8}.
\end{alignat}
It is easy to see that
\begin{equation}\label{v4.9}
\begin{aligned}
\inner{u_t}{\omega_k}+(\tilde{S}^R,D (\omega_k))-(u^\epsilon\cdot\nabla \omega_k,u)=0.
\end{aligned}
\end{equation}
Multiplying both sides of \eqref{v4.9} by $c_{k,m}$ and summing over $k$ we find
\begin{equation}\label{v4.10}
\begin{aligned}
\inner{u_t}{u_m}+(\tilde{S}^R,D (u_m))-(u^\epsilon\cdot\nabla u_m,u)=0.
\end{aligned}
\end{equation}
Let us pass to the limit for $m\to\infty$ in to \eqref{v4.10}.  By these properties \eqref{v4.4},  \eqref{v4.5} and \eqref{v4.8}, we know that as $m\to\infty$
\begin{eqnarray*}
	&&\int_0^T \inner{u_t}{u_m}\to \int_0^T\inner{u_t}{u}=\|u\|^2_{L^2(\Omega_R)}-\|u_0\|^2_{L^2(\Omega_R)},~\,\\
	&& \int_0^T(\tilde{S}^R,D (u_m))\to\int_0^T(\tilde{S}^R,D (u)).
\end{eqnarray*}
Since
\begin{equation*}
(u^\epsilon\cdot\nabla u_m,u)-(u^\epsilon\cdot\nabla u,u)=\int_{\Omega_R}(u^\epsilon\otimes u)\cdot\nabla(u_m-u)\dif x
\end{equation*}
From \eqref{v4.6} and \eqref{v4.7}, we know that $u^\epsilon\otimes u\in L^{2}(\Omega_R\times(0,T))$, Hence $\int_0^T(u^\epsilon\cdot\nabla u_m,u)\to 0$ as $m\to \infty$, since $(u^\epsilon\cdot\nabla u,u)=0.$
Subtracting \eqref{v4.10} by \eqref{ode4.1} and passing to limit as $m\to\infty$, we get
\begin{equation}\label{v4.11}
\lim_{m\to\infty}\int_0^T(S(D(u_m)),D(u_m))\dif t=\int_0^T(\tilde{S}^R,D(u))\dif t.
\end{equation}
Choose $\psi=1$ in $\Omega_R\times [0,T]$ in Appendix \cite{wo}, then by Minty-Trick, it shows that $$\tilde{S}^R=S(D(u))~\mbox{a.e.}~\mbox{in}~\Omega_R\times (0,T).$$

Next we must consider the limit as $R$ tends to $\infty$. Let $u^R$ be the Galerkin approximation solution for the equation \eqref{appro3.4}, therefore, from \eqref{v4.2}, we obtain
\begin{alignat}{3}
&\|u^R\|_{L^{\infty}(0,T;H)}+2c_1\gamma_1\int_{0}^{t}\int_{\Omega_R}\phi(u^R(x,\tau)\dif x\dif \tau\le  \|u_0\|_2^2:=K,\label{u4.12}
\end{alignat}
from above \eqref{u4.12}, there exists $C(K)$ independent of $R$ and $\epsilon$, we know that
\begin{equation}\label{d4.12}
\int_{0}^{T}\int_{|D(u)^R|\ge 1}|D(u^R)|^p\le C(K)~\mbox{and}~\int_{0}^{T}\int_{|D(u)^R|\le 1}|D(u^R)|^2\le C(K).
\end{equation}  
From \eqref{2.5},  we can observe that
\begin{equation}\label{s4.12}
\begin{aligned}
&\int_{0}^T\int_{\Omega_{R}}\left|S(D(u^R))\right|^r\le c_2\gamma_2 \int_{0}^T\int_{\Omega_{R}}\left(1+|D(u^R)|^2\right)^\frac{(p-2)r}{2}|D(u^R)|^r\\
&\le c_2\gamma_2\left(\int_{0}^T\int_{|D(u^R)|\ge 1}+\int_{0}^T\int_{|D(u^R)|\le 1}\right)\left(1+|D(u^R)|^2\right)^\frac{(p-2)r}{2}|D(u^R)|^r\\
&:=c_2\gamma_2(J_1+J_2).
\end{aligned}
\end{equation}
Since

\begin{alignat}{3}
&J_1=\int_{0}^T\int_{|D(u^R)|\ge 1}\left(1+|D(u^R)|^2\right)^\frac{(p-2)r}{2}|D(u^R)|^r\le \int_{0}^T\int_{|D(u^R)|\ge 1}|D(u^R)|^{(p-1)r}, \label{j11}\\
\mbox{or}~
&J_1=\int_{0}^T\int_{|D(u^R)|\ge 1}\left(1+|D(u^R)|^2\right)^\frac{(p-2)r}{2}|D(u^R)|^r\le \int_{0}^T\int_{|D(u^R)|\ge 1}|D(u^R)|^\frac{pr}{2},\label{j12}\\
&J_2=\int_{0}^T\int_{|D(u^R)|\le 1}\left(1+|D(u^R)|^2\right)^\frac{(p-2)r}{2}|D(u^R)|^r\le \int_{0}^T\int_{|D(u^R)|\le 1}|D(u^R)|^r\label{j2}
\end{alignat}
thus take $r=p'=\frac{p}{p-1}>2$ in \eqref{s4.12}, by \eqref{j11}, \eqref{j2} and $r=2$ in \eqref{s4.12}, by \eqref{j12}, \eqref{j2} respectively, we have 
\begin{equation}\label{S4.13}
\begin{aligned}
\int_{0}^T\int_{\Omega_{R}}\left|S(D(u^R))\right|^{p'}\dif x\dif t\le C(K),\\
	\int_{0}^T\int_{\Omega_{R}}\left|S(D(u^R))\right|^{2}\dif x\dif t\le C(K).
	\end{aligned}
\end{equation}
Hence, it follows that, for any $2\le s\le p'$, by the interpolation,  
\begin{equation}\label{s5.2}
\|S(D(u^R))\|_{L^{s}(Q_T)}\le C(K).
\end{equation}
One infers that
\begin{equation}\label{ut4.12}
\|(u^{R})^\prime\|_{L^{2}(0,T;(D^{1,2}(\Omega_R)\cap H(\Omega_R))^*)}\le C(\epsilon, K)
\end{equation}
We can extend the $u^R$ to zero outside of $\Omega_R$. From above estimates \eqref{u4.12}, \eqref{ut4.12}, \eqref{S4.13} and Aubin-Lions Lemma, we can extract a subsequence of $\{u^R\}$, denoting it by $\{u^{R_k}\}_{k\in\mathbb{N}}$, such that as $R_k\to\infty,$
\begin{alignat}{12}
&(u^{R_k})^\prime\rightharpoonup u',\,\, &&~\mbox{weakly\,\, in}~ L^{p'}(0,T;(D^{1,2}(\mathbb{R}^3_+)\cap H)^*)\label{va4.4};\\
&u^{R_k} \stackrel{*}{\rightharpoonup} u,\,\, &&~\mbox{weakly}^* \mbox{\,\, in}~ L^{\infty}(0,T;H)\label{va4.5};\\
&u^{R_k} \rightharpoonup u,\,\,&&~\mbox{weakly\,\, in}~ L^p(0,T;D^{1,\phi}((\mathbb{R}^3_+)))\label{va4.6};\\
&u^{R_k}\longrightarrow u,\,\,&&~\mbox{strongly\,\, in}~ L^q(0,T;L^q_{\rm loc}(\mathbb{R}^3_+))\,~ q\in [1,\frac{5p}{3})\label{va4.7};\\
&S(D(u^{R_k}))\rightharpoonup G,\,\,&&~\mbox{weakly\,\, in}~ L^{p'}(\mathbb{R}^3_+\times(0,T)) \label{va4.8}.
\end{alignat}
  It suffices to study the more estimates for $u^R$ such that $D(u^{R_k})\to D(u) $ a.e. in $\mathbb{R}^3_+\times(0,T)$, as $R_k\to \infty.$  Since we can not obtain more regularity estimates from the above estimates, thus we start at the Galerkin approximation solutions $\{u^{R}_m\}$    for  given domain $\Omega_{R}$, and still denote it by $u_{m}$ as above.
  
 From Proposition \ref{pro2.7}, let  us consider $\eta$ smooth cut-off functions such that
  $\eta(x)=1$ for $|x|\le \frac{R}{2}$ and $\eta(x)=0$ for $|x|\ge R$, for any $x\in\mathbb{R}^3$, $|\nabla^k\eta|\le \frac{C}{R^k}$ and $\left|\frac{\nabla \eta}{\eta}\right|\le \frac{C}{R}.$ We observe that 
  \begin{equation}\label{bdry4.9}
  \begin{aligned}
  \C (\eta^2\C u_m)=2\eta\nabla\eta\times\C u_m+\eta^2\Delta u_m,~\C (\eta^2\C u_m|_{|x|=R})=0,\\
   \C (\eta^2\C u_m)\cdot\hat{n}|_{x_3=0}=\eta\nabla\eta\cdot(\C u_m\times\hat{n})|_{x_3=0}+\eta^2\Delta u_{m,3}=0.
  \end{aligned}
  \end{equation}
Therefore, take $-\C (\eta^2\C u_m)$  as a test function in \eqref{ode4.1}, it infers that 
\begin{equation}\label{ode4.10}
\begin{aligned}
( \p_tu_m,-\C (\eta^2\C u_m))+(S(D(u_m))-u_m^\epsilon\otimes u_m,D( -\C (\eta^2\C u_m)))=0.
\end{aligned}
\end{equation}
We will give the estimates one by one, $( \p_tu_m,-\C (\eta^2\C u_m))=\frac{1}{2}\|\eta\C u_m\|^2_2$. From the boundary conditions \eqref{1.4} and integrate by parts, we have 
\begin{equation}\label{s4.11}
	\begin{aligned}
&|(u_m^\epsilon\otimes u_m,D( \C (\eta^2\C u_m)))|=\left|\int_{\Omega_R}\C(u_m^\epsilon\cdot\nabla u_m)\cdot \eta^2\C u_m\dif x\right|\\
&=\left|\int_{\Omega_R}\eta^2\C u_m^\epsilon\cdot\nabla u_m\cdot\C u_m\dif x+\int_{\Omega_R}\eta^2u_m^\epsilon\cdot\nabla \C u_m\cdot\C u_m\dif x\right|\\
&\le \|\nabla u^\epsilon_m\|_{6}\|\eta\nabla u_m\|_3\|\eta\C u_m\|_2+\left\|\frac{\nabla\eta}{\eta}\right\|_\infty\|u^\epsilon_m\|_\infty\|\eta \C u_m\|_2^2\\
&\le\frac{1}{\epsilon^2}\|u_0\|_2\|\eta\nabla u_m\|_3\|\eta\C u_m\|_2+\frac{C}{R\epsilon^2}\|u_0\|_2\|\eta \C u_m\|_2^2.
	\end{aligned}
\end{equation}
Since $S(D(u))$ is deduced by a $p-$potential $\Phi$ and satisfies the property \eqref{2.1}, hence
\begin{equation}\label{es4.12}
	\begin{aligned}
&(S(D(u_m)),D(- \C (\eta^2\C u_m)))
=-\int_{\Omega_R}(S(D(u_m)): \C (\eta^2\C D(u_m)))\dif x\\
&-\int_{\Omega_R}(S(D(u_m)): \C (2{\rm sym}(\eta\nabla\eta\otimes\C u_m)))\dif x\\
&=\int_{\Omega_R}(\eta^2\C S(D(u_m)): \C D(u_m))\dif x-\int_{x_3=0}(\eta^2S(D(u_m))\times\hat{n}:\C D(u_m))\dif\sigma\\
&-\int_{\Omega_R}(S(D(u_m)): \C (2{\rm sym}(\eta\nabla\eta\otimes\C u_m)))\dif x\\
&\ge c_1\gamma_1\int_{\Omega_R}\eta^2(1+|D(u_m)|^2)^\frac{p-2}{2}\left|\C D(u_m)\right|^2\dif x+I_1+I_2,
	\end{aligned}
\end{equation}
where $I_1=-\int_{x_3=0}(\eta^2S(D(u_m))\times\hat{n}:\C D(u_m))\dif\sigma$, $I_2=-\int_{\Omega_R}(S(D(u_m)): \C (2{\rm sym}(\eta\nabla\eta\otimes\C u_m)))\dif x$,
hereafter the symbol ${\rm sym} (B)$ is the symmetric part of the matrix $B$, and denote $I_\eta(u)=\int_{\Omega_R}\eta^2(1+|D(u)|^2)^\frac{p-2}{2}\left|\C D(u)\right|^2\dif x.$

Since  the boundary conditions \eqref{1.4} and incompressibility imply that $$D_{jk}(u_m)\p_3D_{jk}(u_m)=0$$ for $j,k=1,2,3$ on $x_3=0$, thus from the formula $S(D(u))=F'(|D(u)|)\frac{D(u)}{|D(u)|}$, we have
\begin{equation}\label{s4.13}
\begin{aligned}
&I_1=-\int_{x_3=0}\eta^2(S(D(u_m))\times\hat{n}:\C D(u_m))\dif\sigma\\
&=\sum_{i,j,k}\int_{x_3=0}\eta^2F'(|D(u)|)\frac{D_{jk}(u)}{|D(u)|}n_i\varepsilon_{ijs}\p_i D_{jk}(u_m)\varepsilon_{ijs}\dif\sigma\\
&=\sum_{i,j,k}\int_{x_3=0}\eta^2F'(|D(u)|)\frac{D_{jk}(u)}{|D(u)|}\p_3 D_{jk}(u_m)=0
\end{aligned}
\end{equation}
and from the construction of cut-off function $\eta$, one obtains 
\begin{equation}\label{s4.14}
\begin{aligned}
&|I_2|\le \left|\int_{\Omega_R}(S(D(u_m)): \C (2{\rm sym}(\eta\nabla\eta)\otimes\C u_m))\dif x\right|\\
&+\left|\int_{\Omega_R}(S(D(u_m)): 2{\rm sym}(\eta\nabla\eta)\otimes\Delta u_m)\dif x\right|\\
&\le\frac{C}{R^2}\int_{\Omega_R}\left(1+|D(u_m)|^2\right)^\frac{p-power2}{2}|D(u_m)|^2\dif x+\frac{C}{R}\int_{\Omega}\eta\left|S(D(u_m))\Delta u_m\right|\\
&\le\frac{C}{R^2}+\frac{1}{R}\int_{\Omega_R}\eta\left(1+|D(u_m)|^2\right)^\frac{p-2}{2}|D(u_m)||\Delta u_m|\\
&\le \frac{C}{R^2}+\frac{1}{R}I_\eta(u_m)^\frac{1}{2}\left(\int_{\Omega_R}\left(1+|D(u_m)|^2\right)^\frac{p-2}{2}|D(u_m)|^2\dif x\right)^\frac{1}{2}\le \frac{C(\delta)}{R^2}+\delta I_\eta(u_m)
\end{aligned}
\end{equation}
where the constants $C$ are independent of $R,m$.
The following three interpolation inequalities will be used 
\begin{alignat}{12}	
&\|\cdot\|_3\le\|\cdot\|_2^\alpha\|\cdot\|_{3p}^{1-\alpha}~\alpha=\frac{2(p-1)}{3p-2},\label{neq4.15}\\
&\|\cdot\|_3\le\|\cdot\|_p^\beta\|\cdot\|_{3p}^{1-\beta}~\beta=\frac{p-1}{2}, \label{neq4.16}\\
&I_\eta(u)\ge C\|D(u)\|_{3p,\ge}^p, \label{neq4.17}
\end{alignat}
where $\|v\|^s_{s,\le}:=\int_{|v|\le 1}|v|^s\dif x$ and $\|v\|^s_{s,\ge}:=\int_{|v|\ge 1}|v|^s\dif x$.  Therefore, from \eqref{neq4.15}, \eqref{neq4.16} and \eqref{neq4.17}, we can follow
\begin{equation}\label{v4.18}
\begin{aligned}
&\|\eta\nabla u_m\|_3\|\eta\C u_m\|_2\le C\left(\|\nabla u_m\|_{2,\le}^\frac{2}{3}+ I_\eta(u_m)^\frac{1-\beta}{p}\|\nabla u_m\|_{p,\ge}^\beta\right)\|\eta\C u_m\|_2\\
&\le C\|\nabla u_m\|_{2,\le}^\frac{2}{3}\|\eta\C u_m\|_2+\|\nabla u_m\|^\frac{p}{3}_{p,\ge}\|\eta\C u_m\|_2^\frac{2p}{3(p-1)}+\delta I_\eta(u_m)\\
&\le C(\|\nabla u_m\|_{2,\le}^2+\|\nabla u_m\|_{p,\ge}^p)^\frac{1}{3}(\|\eta\C u_m\|_2^2+1)^\lambda+\delta I_\eta(u_m) 
\end{aligned}	
\end{equation}
where $\lambda=\frac{2p}{3p-3}>1$ with $p\in (\frac{8}{5},2] $ and $C$ is independent of $R,m$.

Combining \eqref{ode4.10}, \eqref{s4.11}, \eqref{s4.13}, \eqref{s4.14} and \eqref{v4.18}, we infer
\begin{equation*}
\begin{aligned}
\frac{\dif}{\dif t}\|\eta\C u_m\|_2^2+ CI_\eta(u_m)\le C(\|\nabla u_m\|_{2,\le}^2+\|\nabla u_m\|_{p,\ge}^p+\frac{1}{R^2})(\|\eta\C u_m\|_2^2+1)^\lambda
\end{aligned}
\end{equation*}
with $\lambda=\frac{2p}{3p-3}$, where $C$ only depends on $\epsilon$ and $\|u_0\|_2.$
Dividing above inequality by $(\|\eta\C u_m\|_2^2+1)^\lambda$, integrating with respect to time and using the estimates \eqref{d4.12}, we deduce that there is a constant $C$ that depends on $\epsilon$ and $\|u_0\|_2^2$, does not depend on $m,R$ such that
\begin{equation}\label{v4.19}
	\int_{0}^{T}I_\eta(u_m)(\|\eta\C u_m\|_2^2+1)^{-\lambda}\dif t\le C\left(1+\frac{1}{R^2}\right).
\end{equation}
Subtly modify the proofs of Lemma 6.2 and Lemma 6.3  in \cite{bu2}, we can conclude that for any bounded domain $\Omega^\prime$ of $\mathbb{R}^3_+$, as $R$ large enough, there exist a constant $C$ independent of $R,m$ and $0<\gamma<1 $ such that 
\begin{equation}\label{v4.20}
	\int_{0}^T\left(\int_{\Omega^\prime}|\nabla^2 u_m|^p\dif x\right)^\gamma\dif t\le C.
\end{equation}

 Applying the standard interpolation, \eqref{u4.12}, \eqref{v4.20} and the Young inequality, we conclude that there exists a constant $C$ is independent of $R,m$ such that
 $$	\int_{0}^T\|u_m^R\|^p_{W^{1+\sigma,p}(\Omega')}\le \int_{0}^{T}\|u_m^R\|^{p(1-\sigma)}_{W^{1,p}(\Omega')}\|u_m^R\|^{p\sigma}_{W^{2,p}(\Omega')}\dif t\le C$$provided that $\sigma$ small enough. Whence let $m\to \infty$ we have got 
 $$	\int_{0}^T\|u^{R}\|^p_{W^{1+\sigma,p}(\Omega')}\le C$$
 where $C$ does not depend on $R$. Since the compact embedding $W^{1+\sigma,p}(\Omega')$ into $W^{1,p}(\Omega')$,  Thus we  obtain there exists a subsequence $\{u^{R_k}\}$ such that 
 $$D(u^{R_k})\to D(u)~\mbox{a.e. in}~  \mathbb{R}^3_+\times (0,T), ~\mbox{as}~R_k\to\infty, $$
 which finishes the proof.
\end{proof}
\section{The proof of main result}
We have constructed the approximation solution $u_\epsilon$ for the problem \eqref{1.1}, and now we will show the approximation solutions  converge to the solution of the original problem \eqref{1.1} in this section. Denote $Q_T=\mathbb{R}^3_+\times (0,T).$

To finish the proof of theorem \ref{mresult}, we need to consider the estimates of  the pressure as follows.
\begin{Proposition}
Let $\frac{8}{5}< p\le2,$ $\phi(s)=s^2(1+s^2)^{\frac{p-2}{2}}$ and $S(D(u))$ be induced by the $p-$potential of Definition  2.1.  Assume that  a vector function $$u\in L^\infty(0,T;H)\cap L^p(0,T; D^{1,\phi}(\mathbb{R}^3_+)) ~\mbox{with}~u_t\in U^* $$  satisfies the following identity \eqref{v3.5} for any 
$\psi\in U$ with divergence free and $\psi_3|_{x_3=0}=0.$ Then there exists a unique pressure $\pi$ solves the  following problem 
\begin{equation}\label{pi}
\int_{0}^T \langle u_t,\varphi\rangle\dif t+\int_{Q_T}(S(D(u))-u^\epsilon\otimes u:D(\varphi))+\pi\nabla\cdot\varphi\dif x\dif t=0
\end{equation}
 for any $\varphi\in C^\infty(Q_T) $ with $\varphi_3|_{x_3=0}=0.$ Here $\pi=\pi_1+\pi_2$ with 
 \begin{equation}\label{pre}
  \|\pi_1\|_{\frac{r}{2}}\le C(r)\|u^\epsilon\otimes u\|_{r},~\forall r\in (2,\frac{5p}{3}],~ \|\pi_2\|_s\le C(s)\|D(u)\|_{L^\phi}, ~\forall s\in [2,p'].
 \end{equation}
 Moreover, if there is a sequence $\{u_n\}_{n=1}^\infty$ satisfying
 \begin{alignat*}{12}
 &u_n \rightharpoonup u~ \mbox{weakly in}~ L^r(Q_T)~ \mbox{for all}~ r\in [2,\frac{5p}{3}],\\
 &u_n\to u ~a.e. ~\mbox{in}~Q_T,\\
 &\nabla u_n\to\nabla u~ a.e. ~\mbox{in}~Q_T,\\
 &\|u_n\|_{D^{1,\phi}}\le C ~\mbox{uniformly w.r.t.}~ n,
 \end{alignat*}
 then there is a subsequence $\{\pi^n\}$ solving \eqref{pi} such that ($\pi^n=\pi^n_1+\pi^n_2$)
 \begin{alignat*}{12}
 \pi^n_1 \rightharpoonup \pi_1 ~ \mbox{weakly in}~ L^q(Q_T)~ \mbox{for all}~ q\in (1,\frac{5p}{6}],\\
  \pi^n_2 \rightharpoonup \pi_2 ~ \mbox{weakly in}~ L^s(Q_T)~ \mbox{for all} ~s\in [2,p'],
 \end{alignat*}
 and the pair $(u,\pi)$ solves \eqref{pi}.
	\end{Proposition}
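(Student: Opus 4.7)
\medskip

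\noindent\textbf{Proof proposal.} My strategy is to reconstruct the pressure as the unique decaying solution of a Neumann Poisson problem on the half space. Formally applying the divergence to the approximate momentum equation and using $\nabla\cdot u = \nabla\cdot u^\epsilon = 0$ yields
\[
-\Delta\pi \;=\; \mathrm{div}\,\mathrm{div}(u^\epsilon\otimes u)\;-\;\mathrm{div}\,\mathrm{div}\,S(D(u)).
\]
The boundary condition $\psi_3|_{x_3=0}=0$ for the velocity test functions is compatible with the Neumann condition $\p\pi/\p x_3|_{x_3=0}=0$, and the decay $\pi(x)\to0$ as $|x|\to\infty$ yields uniqueness. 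I define $\pi_1$ and $\pi_2$ as the solutions of
\[
-\Delta\pi_1 = \mathrm{div}\,\mathrm{div}(u^\epsilon\otimes u),\qquad -\Delta\pi_2 = -\mathrm{div}\,\mathrm{div}\,S(D(u))
\]
in $\mathbb{R}^3_+$ with the above Neumann and decay conditions, using the explicit representation \eqref{uform} (iterated twice) to build them from the data.

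Next, I invoke the Calder\'on--Zygmund estimate \eqref{2.9} applied to $\pi_1$ with $v=u^\epsilon\otimes u$, to obtain $\|\pi_1\|_{q}\le K\|u^\epsilon\otimes u\|_q$ for every $q\in(1,\infty)$. H\"older's inequality gives $\|u^\epsilon\otimes u\|_{r/2}\le \|u^\epsilon\|_r\|u\|_r$, and Corollary~\ref{c2.6} (together with the fact that $J_\epsilon$ is a bounded smoothing operator) bounds both factors for $r\in[2,5p/3]$. Taking $q=r/2$ yields the first half of \eqref{pre}. For $\pi_2$ I apply \eqref{2.9} with $S$ in place of $v\otimes v$: $\|\pi_2\|_s\le K\|S(D(u))\|_s$. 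The pointwise bound \eqref{2.5} combined with the $L^\phi$ control of $D(u)$ gives $\|S(D(u))\|_2$ and $\|S(D(u))\|_{p'}$, precisely as in \eqref{j11}--\eqref{S4.13}, and interpolation covers every $s\in[2,p']$, which proves the second half of \eqref{pre}.

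To recover the identity \eqref{pi} for an arbitrary $\varphi\in C^\infty(Q_T)$ with $\varphi_3|_{x_3=0}=0$, I use a time-slice Helmholtz-type decomposition $\varphi(\cdot,t)=\varphi_\sigma(\cdot,t)+\nabla g(\cdot,t)$, where $g$ solves the Neumann Laplace problem $\Delta g=\mathrm{div}\,\varphi$ with $\p g/\p x_3|_{x_3=0}=\varphi_3|_{x_3=0}=0$ and decay at infinity. The solenoidal part $\varphi_\sigma$ is admissible as a test function in \eqref{v3.5}; for the gradient part, integration by parts converts $\int\pi\,\nabla\cdot(\nabla g)\,\dif x = \int\pi\,\Delta g\,\dif x$ into the identity that, together with the defining PDEs for $\pi_1,\pi_2$, exactly matches the contribution $\int(S-u^\epsilon\otimes u){:}D(\nabla g)\,\dif x+\langle u_t,\nabla g\rangle$. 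The vanishing of boundary terms relies on $\p g/\p x_3=0$ on $x_3=0$, which is why the condition $\varphi_3|_{x_3=0}=0$ is essential.

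Finally, for the convergence statement, from $u_n\to u$ and $\nabla u_n\to\nabla u$ a.e., continuity of $S$ gives $S(D(u_n))\to S(D(u))$ a.e., and $u_n^\epsilon\otimes u_n\to u^\epsilon\otimes u$ a.e.\ follows directly. The uniform $D^{1,\phi}$ bound, together with \eqref{2.5} and Corollary~\ref{c2.6}, provides equi-integrability in $L^s(Q_T)$ for $s\in[2,p']$ and in $L^{r/2}(Q_T)$ for $r\in[2,5p/3]$; Vitali's convergence theorem then upgrades a.e.\ convergence to weak convergence in these spaces. Since $\pi_1^n,\pi_2^n$ depend continuously and linearly on $u_n^\epsilon\otimes u_n$ and $S(D(u_n))$ via the fixed Calder\'on--Zygmund operators of Step 2, weak convergence passes through, giving $\pi^n_1\rightharpoonup \pi_1$ and $\pi^n_2\rightharpoonup \pi_2$ in the claimed ranges; passage to the limit in \eqref{pi} is then term-by-term. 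The main technical obstacle I anticipate is the rigorous justification of Step~3 (the identity for non-solenoidal tests): one must carefully verify that the Neumann datum $\p g/\p x_3=0$ follows from $\varphi_3|_{x_3=0}=0$ and that the resulting $\nabla g$ lies in the correct function space so that all the dualities and integrations by parts are licit.
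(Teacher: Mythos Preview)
Your approach is essentially the same as the paper's: solve a Neumann Poisson problem for $\pi=\pi_1+\pi_2$ in $\mathbb{R}^3_+$, invoke the Calder\'on--Zygmund bounds \eqref{2.7}--\eqref{2.9} together with \eqref{2.5} and Corollary~\ref{c2.6} for the estimates \eqref{pre}, and pass to the limit for the sequence. Your Steps~3--4 (Helmholtz splitting of the test function, Vitali for the limit) are more detailed than the paper, which simply sends the reader to \cite{bu2} at that point.

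There is, however, one genuine gap. You \emph{assume} the homogeneous Neumann condition $\partial_{x_3}\pi|_{x_3=0}=0$ and justify it only by saying it is ``compatible'' with $\psi_3|_{x_3=0}=0$. The paper instead writes down the \emph{natural} Neumann data coming from the equation,
\[
\frac{\partial\pi_1}{\partial\hat n}=\mathrm{div}(u^\epsilon\otimes u)\cdot\hat n,\qquad
\frac{\partial\pi_2}{\partial\hat n}=\mathrm{div}\,S(D(u))\cdot\hat n\quad\text{on }\{x_3=0\},
\]
and then \emph{computes} that both vanish. For $\pi_1$ this is immediate from $u_3|_{x_3=0}=0$. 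For $\pi_2$ it is not: one needs the full slip conditions \eqref{1.4} (in particular $\partial_{x_3}u_i|_{x_3=0}=0$ for $i=1,2$) together with incompressibility to obtain $\partial_3(|D(u)|^2)=0$ and $\Delta u_3=0$ on $\{x_3=0\}$, and one must use the specific $p$-potential form $S=F'(|D|)\,D/|D|$ to conclude $\mathrm{div}\,S(D(u))\cdot\hat n=0$. This computation is the only substantive step in the paper's proof, and without it the representation \eqref{uform}---and hence the estimate \eqref{2.9} you rely on---is not available for $\pi_2$. You should insert this verification before invoking the half-space Green formula.
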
 
\begin{proof}
	The proof follows almost the procedure showed in \cite{bu2}, therefore, we can check the result for the smooth vector functions $u\in \mathcal{D}(\mathbb{R}_+^3)$ with divergence free and the boundary conditions \eqref{1.4}. Hence, we solve the pressure $\pi=\pi_1+\pi_2$ to the problem \eqref{pi} determined by the following two equations
	\begin{equation}\label{pi1}\left\{
	\begin{aligned}
	&-\Delta \pi_1=\di\di (u^\epsilon\otimes u), \, \, \, &&\mbox{in}\,\, \mathbb{R}^3_+;\\
	&\frac{\p \pi_1}{\p\hat{ n}}=\di (u^\epsilon\otimes u)\cdot\hat{n}, \, \, \, &&\mbox{on}\,\, \{x_3=0\},\\
	&\pi_1(x)\to 0, \,\, \, &&\mbox{as}\,\, |x|\to\infty
	\end{aligned}\right.
	\end{equation}
	and
		\begin{equation}\label{pi2}\left\{
		\begin{aligned}
		&\Delta \pi_2=\di\di (S(D(u)), \, \, \, &&\mbox{in}\,\, \mathbb{R}^3_+;\\
		&\frac{\p \pi_2}{\p\hat{ n}}=\di (S(D(u)) \cdot\hat{n}, \, \, \, &&\mbox{on}\,\, \{x_3=0\},\\
		&\pi_2(x)\to 0, \,\, \, &&\mbox{as}\,\, |x|\to\infty
		\end{aligned}\right.
		\end{equation}
One can simplify that the boundary conditions of equations  both in \eqref{pi1} and \eqref{pi2}. In fact, $\di (u^\epsilon\otimes u)\cdot\hat{n}=u^\epsilon\cdot \nabla u\cdot\hat{n}=0$ as $u_3=0$ on $x_3=0$. By calculation, we observe that on $x_3=0$,
\begin{equation*}
	\di (S(D(u)) \cdot\hat{n}=G'(|D(u)|)\p_3(|D(u)|^2)+G(|D(u)|)\Delta u_3
\end{equation*}
where $G(|D(u)|)=\frac{F'(|D(u)|)}{|D(u)|}.$ From the \eqref{1.4} and incompressibility, we know that  $\Delta u_3=\p_3(|D(u)|^2)=0$ on $x_3=0.$ Therefore, $\pi_1$ and $\pi_2$ can be determined by \eqref{uform} with $f=u^\epsilon\otimes u$ and $f=S(D(u))$ respectively. The estimates \eqref{pre} can be obtained by \eqref{2.9}. The remained proof is the same procedure of Proposition 4.1 in \cite{bu2}.
	\end{proof}

\noindent{\bf The proof of Theorem 
	\ref{mresult}.}	 From \eqref{u4.12}, let $R\to\infty$, we obtain 
\begin{alignat}{3}
&\|u_\epsilon\|_{L^{\infty}(0,T;H)}+2C\int_{0}^{T}\int_{\mathbb{R}^3_+}\left(\phi(D(u_\epsilon))+|u_\epsilon|^r\right)\dif x\dif t\le K,\label{u5.1}
\end{alignat}
for all $r\in [2,\frac{5p}{3}]$ and $C,K$ are independent of $\epsilon.$ From \eqref{s5.2}, we know that 
\begin{equation}\label{s1}\|S(D(u_\epsilon))\|_{L^{s}(Q_T)}\le C(K),~s\in [2,p'].
\end{equation}
 From Proposition 4.1, it shows that
 \begin{equation}\label{5.3}
 \begin{aligned}
 \int_{0}^{T}\|\pi^1_\epsilon\|^{q}_{q}\le C(q,K)~\mbox{and}~\int_{0}^{T}\|\pi^2_\epsilon\|^s_s\le C(s,K)
 \end{aligned}
 \end{equation}
for all $q\in (1,\frac{5p}{6}]$ and  all $s\in [2,p']$ hold.

By \eqref{v3.5}, \eqref{s5.2} and \eqref{5.3}, one obtains 
\begin{equation}\label{5.4}
	\|u_\epsilon^\prime\|_{L^s(0,T;(D^{1,s^\prime}(\mathbb{R}^3_+))^*)\cap L^{q}(0,T;(D^{1,q^\prime}(\mathbb{R}^3_+))^*)}\le K.
\end{equation}

Summing these estimates \eqref{u5.1}, \eqref{s1}, \eqref{5.4} and Aubin-Lions Lemma, we conclude that there exist a sequence of $\{\epsilon_k\}$ with $\epsilon_k\to 0$ as $k\to\infty$,  functions $u\in L^p(0,T; D^{1,\phi}(\mathbb{R}^3_+))\cap L^{\infty}(0,T;H)$ and $\tilde{S}\in L^{s}(Q_T)$, such that as $k\to\infty$
\begin{alignat}{12}
 &u_{\epsilon_k}\stackrel{*}{\rightharpoonup} u, ~~\mbox{weakly}^*\mbox{in}~~L^{\infty}(0,T;H) \label{5.5}\\[3mm]
&u_{\epsilon_k}\rightharpoonup u, ~~\mbox{weakly\,\,in}~~L^p(0,T; D^{1,\phi}(\mathbb{R}^3_+)), \label{5.6}\\[2mm]
&u^\prime_{\epsilon_k}\rightharpoonup u^\prime, ~~\mbox{weakly\,\,in}~~L^s(0,T;(D^{1,s^\prime}(\mathbb{R}^3_+))^*)\cap L^{q}(0,T;(D^{1,q^\prime}(\mathbb{R}^3_+))^*), \label{5.7}\\[2mm]
&S(x,t,D(u_{\epsilon_k}))\rightharpoonup\tilde{S} ~~\mbox{weakly\,\,in}~~L^{s}(Q_T),~s\in [2,p']\label{5.8}\\[2mm]
&u^{\epsilon_k}_{\epsilon_k}\otimes u_{\epsilon_k}\rightharpoonup u\otimes u,~ \mbox{weakly \,\,in}~~L^{q}(Q_T). ~q\in (1,\frac{5p}{6}] \label{5.9}\\[2mm]
& u_{\epsilon_k}\to  u,~ \mbox{strongly \,\,in}~~L^{r}(0,T;L^{r}_{\rm loc}(\mathbb{R}^3_+))~r\in [1,\frac{5p}{3})\label{5.10}\\
&\pi^1_{\epsilon_k} \rightharpoonup \pi^1 ~\mbox{weakly \,\,in}~~L^{q}(Q_T)~q\in (1,\frac{5p}{6}]\label{5.11}\\
&\pi^2_{\epsilon_k} \rightharpoonup \pi^2 ~\mbox{weakly \,\,in}~~L^{s}(Q_T)~q\in [2,p']\label{5.12}
\end{alignat}
Then the identity
\begin{equation}\label{5.13}
 \int_{0}^T \langle u_t,\varphi\rangle\dif t+\int_{Q_T}(\tilde{S}-u\otimes u:D(\varphi))+\pi\nabla\cdot\varphi\dif x\dif t=0
 \end{equation}
 with $\pi=\pi^1+\pi^2$
 holds for any $\varphi\in (L^p(0,T;D_n^{1,\phi}(\mathbb{R}^3_+)) $. 
 
 For convenience, sometimes we denote $u_k=u_{\epsilon_k},\,\,S_k=S(x,t,D(u_{\epsilon_k})).$ To end this proof, we must prove that $\tilde{S}=S(x,t, D(u))$ a.e. in $\mathbb{R}^3_+\times [0,T].$
 As in the \cite {ma}, it suffices to prove that as $k\to\infty$
 \begin{equation}\label{5.14}
 \int_{G\times[\delta,T-\delta]}(S(x,t,D(u_k))-S(x,t,D(u))):(D(u_k)-D(u))\dif x\dif t\to 0
 \end{equation}
 for all bounded compact set $G\subset\overline{\mathbb{R}^3_+}$ and any $0<\delta<\frac{T}{2}.$

 If \eqref{5.14} holds, for any positive cutoff function $\Psi\in C_0^\infty(\overline{\mathbb{R}^3_+}\times (0,T))$, then
\begin{equation*}
\begin{aligned}
 &\int_{Q_T}(\tilde{S}-S(x,t,D(v))):(D(u)-D(v))\Psi\dif x\dif t,\\
 &=\int_{Q_T}(\tilde{S}-S(x,t,D(u_k))):(D(u)-D(v))\Psi\dif x\dif t\\
 &-\int_{Q_T}(S(x,t,D(u_k))-S(x,t,D(u))):(D(u_k)-D(u))\Psi\dif x\dif t\\
 &+\int_{Q_T}(S(x,t,D(u_k))-S(x,t,D(v))):(D(u_k)-D(v))\Psi\dif x\dif t\\
 &-\int_{Q_T}S(D(u))(D(u_k)-D(u))\Psi\dif x\dif t\\
 &:=I_1+I_2+I_3+I_4.
 \end{aligned}
 \end{equation*}
By  \eqref{5.6} and \eqref{5.8}, we know $I_i\to 0 (i=1,2,4)$ as $k\to\infty$. From \eqref{2.4}, it shows that $I_3\ge0.$ By local Minty Trick theorem (see the appendix of \cite{wo}), we know $\tilde{S}=S(x,t, D(u))$ a.e. in $\mathbb{R}^3_+\times (0,T).$

Next, we will prove that \eqref{5.14} by some  $L^\infty-$truncation method. As in \cite{fr} or \cite{bu}, Let $g^k=\phi(\nabla u_k)+\phi(\nabla u)+(|S_k|+|S(x,t,D(u))|)(|D(u_k)+D(u)|).$
The following lemma shows that the properties of $g^k$ on   $\mathbb{R}^3_+\times (0,T)$, its proof can be found in \cite{fr} for the steady case and in \cite{bu} for the unsteady case.
\begin{Lemma}
$\eta>0$, there exists $L\le\frac{\eta}{K}$ and there are a subsequence $\{u_k\}_{k=1}^\infty$ and sets $E^k=\{(x,t)\in\mathbf{R}^3_+\times(0,T): L^2\le|u_k-u|\le L\}$ such that \begin{equation}\label{3.11}
\int_{E^k} g^k\dif x\dif t\le\eta.
\end{equation}
\end{Lemma}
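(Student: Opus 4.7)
The plan is to execute the standard $L^\infty$-truncation pigeonhole construction of \cite{fr,bu}, specialised to the functional $g^k$ appearing in this setting. I would split the argument into three steps.

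\textbf{Step 1 (uniform $L^1(Q_T)$ bound on $g^k$).} First I would verify that $\|g^k\|_{L^1(Q_T)}\le K$ with $K$ independent of $k$. The pure terms $\phi(|\nabla u_k|)$ and $\phi(|\nabla u|)$ are controlled by the Korn inequality (Lemma \ref{l2.5}) combined with the energy estimate \eqref{u5.1}. The diagonal products $|S_k||D(u_k)|$ and $|S||D(u)|$ are dominated pointwise by $C\phi(|D(u_k)|)$ and $C\phi(|D(u)|)$ respectively, after splitting into $\{|D(u_\cdot)|\le1\}$ and $\{|D(u_\cdot)|>1\}$ and using the growth condition \eqref{2.5}. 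For the genuinely cross products $|S_k||D(u)|$ and $|S||D(u_k)|$ I would invoke Young's inequality in the Orlicz pair $(\phi,\phi^*)$, giving $|S_k||D(u)|\le\phi(|D(u)|)+\phi^*(|S_k|)$; the same high/low split of \eqref{2.5} yields $\phi^*(|S_k|)\le C\phi(|D(u_k)|)$, which is again $L^1$-controlled.

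\textbf{Step 2 (disjoint dyadic annuli and pigeonhole in $j$).} Set $L_0:=\min(\eta/K,1)$, $L_{j+1}:=L_j^2$ for $j\ge 0$, and pick an integer $M\ge K/\eta$. Since $L_0\le 1$ the sequence $L_j$ is strictly decreasing, so the annular sets
\[
E^k_j:=\{(x,t)\in Q_T:\,L_j^2\le|u_k(x,t)-u(x,t)|\le L_j\},\quad j=0,\ldots,M-1,
\]
are pairwise disjoint up to null sets. Therefore
\[
\sum_{j=0}^{M-1}\int_{E^k_j}g^k\dif x\dif t\le\int_{Q_T}g^k\dif x\dif t\le K,
\]
and pigeonhole produces, for every $k$, some $j(k)\in\{0,\ldots,M-1\}$ with $\int_{E^k_{j(k)}}g^k\le K/M\le\eta$.

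\textbf{Step 3 (constant-$j$ subsequence).} Because $j(k)$ takes values in a finite set, a diagonal extraction yields a subsequence (not relabelled) along which $j(k)\equiv j_0$. Taking $L:=L_{j_0}\le L_0\le\eta/K$ and $E^k:=E^k_{j_0}$ completes the proof of \eqref{3.11}.

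\textbf{Main obstacle.} The delicate point is Step 1: verifying $L^1(Q_T)$-integrability of the cross products $|S_k||D(u)|$ and $|S||D(u_k)|$ on the unbounded half-space. A naive Hölder bound via $\|S_k\|_{L^{p'}}\|D(u)\|_{L^p}$ fails because $L^\phi\not\hookrightarrow L^p$ globally for $p<2$; it is precisely the Orlicz Young inequality combined with the $p$-potential growth \eqref{2.5} that rescues the estimate. Steps 2 and 3 are then elementary pigeonhole manipulations, and I note that they do not rely on the strong convergence of $u_k$ to $u$—that convergence is only needed later when the lemma is applied in the Minty-type identification of $\tilde S$.
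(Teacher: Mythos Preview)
Your proposal is correct and follows exactly the approach the paper defers to: the paper gives no proof of its own but refers to \cite{fr} and \cite{bu}, and what you have written is precisely the three-step pigeonhole argument of those references (uniform $L^1$ bound on $g^k$, dyadic annuli $L_{j+1}=L_j^2$, extraction of a constant-level subsequence). Your identification of the only non-routine point---the Orlicz--Young control of the cross terms $|S_k|\,|D(u)|$ on the unbounded domain via $\phi^*(|S_k|)\le C\phi(|D(u_k)|)$---is exactly right; the remaining steps are indeed elementary.
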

Set $Q^k=\{(x,t)\in\mathbb{R}^3_+\times(0,T): |u_k-u|\le L\}$ and  $\psi^k=(u_k-u)\left(1-\min\left(\frac{|u_k-u|}{L},1\right)\right)$. One can prove that
the following proposition that shows the required properties of $\psi^k$.
\begin{Proposition}\label{phik}

\begin{itemize}
\item[(1)] $\psi^k\in L^p(0,T; D^{1,\phi}(\mathbb{R}^3_+))\cap L^{\infty}(0,T;H)$ and $$\|\psi^k\|_{L^\infty(\mathbb{R}^3_+\times(0,T))}\le L;$$
\item[(2)] $\psi^k\rightharpoonup 0$ in $L^p(0,T;D^{1,\phi}(\mathbb{R}^3_+))$;
\item[(3)] $\psi^k\to 0$ in $ L^s(0,T;L^s_{\rm loc}(\mathbb{R}^3_+))$ for all $1\le s<\infty$;
\item[(4)] $|\di\psi^k|\le \left|\frac{1}{L}(u^k-u)\cdot\nabla|u^k-u|\chi_{Q^k}\right|$; and $$|\di\psi^k|_{L^p(0,T:L^\phi(\mathbb{R}^3_+))}\le C\eta,\,\ |\nabla\psi^k|_{L^p(0,T:L^\phi(\mathbb{R}^3_+))}\le C\eta,$$ where C is independent of $k$
and $\chi_{Q^k}$ denotes the characteristic function of the set $Q^k.$
\end{itemize}
\end{Proposition}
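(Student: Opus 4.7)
The plan is to verify each of the four properties by direct chain-rule computation, making essential use of the divergence-free constraint on $u_k-u$ and of the measure-theoretic bound on $E^k$ supplied by the preceding lemma.

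First I would compute the gradient and divergence of $\psi^k$ explicitly. On the set $\{|u_k-u|\ge L\}$ one has $\psi^k\equiv 0$, while on $Q^k=\{|u_k-u|<L\}$ the chain rule gives
$$\nabla\psi^k=\left(1-\tfrac{|u_k-u|}{L}\right)\nabla(u_k-u)-\tfrac{1}{L}(u_k-u)\otimes\nabla|u_k-u|,$$
from which the pointwise bound $|\nabla\psi^k|\le 2|\nabla(u_k-u)|\,\chi_{Q^k}$ is immediate. Taking the trace and using $\di(u_k-u)=0$ cancels the first term, leaving
$$\di\,\psi^k=-\tfrac{1}{L}(u_k-u)\cdot\nabla|u_k-u|\,\chi_{Q^k},$$
which is precisely the pointwise identity asserted in (4).

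Properties (1)--(3) then follow from standard truncation and compactness arguments. The bound $|\psi^k|\le|u_k-u|\chi_{Q^k}\le L$ gives $\|\psi^k\|_{L^\infty}\le L$, and the pointwise gradient bound together with the uniform $L^p(0,T;D^{1,\phi})$ control inherited from \eqref{5.6} places $\psi^k$ in $L^p(0,T;D^{1,\phi})\cap L^\infty(0,T;H)$, yielding (1). Extracting a subsequence for which $u_k\to u$ almost everywhere on $Q_T$ (from the local strong convergence in \eqref{5.10}) gives $\psi^k\to 0$ a.e., and combined with the uniform $L^\infty$ bound, dominated convergence produces the strong convergence (3) in every $L^s(0,T;L^s_{\mathrm{loc}}(\mathbb{R}^3_+))$. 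Property (2) then follows from the uniform $L^p(L^\phi)$ bound on $\nabla\psi^k$, the strong convergence of $\psi^k$ to $0$, and testing against compactly supported smooth fields to identify the unique weak limit as zero.

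The main obstacle is the quantitative estimate of order $C\eta$ for $\|\di\psi^k\|_{L^p(L^\phi)}$ and $\|\nabla\psi^k\|_{L^p(L^\phi)}$; this does not follow from the pointwise bounds alone but rests on the decomposition $Q^k=A^k\cup E^k$ with $A^k=\{|u_k-u|<L^2\}$. On $A^k$ the weight $|u_k-u|/L\le L\le\eta/K$ is uniformly small, so the pointwise identity for $\di\psi^k$ gives $|\di\psi^k|\le L|\nabla(u_k-u)|$, and the $p$-growth of $\phi$ together with the uniform modular control on $\nabla(u_k-u)$ from \eqref{u5.1} absorbs this into a factor $L^\alpha$ with $\alpha>0$ chosen to match the balance between $\phi(s)\sim s^2$ near zero and $\phi(s)\sim s^p$ at infinity. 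On $E^k$ the pointwise bounds for both $|\di\psi^k|$ and $|\nabla\psi^k|$ are dominated by $g^k=\phi(\nabla u_k)+\phi(\nabla u)+(|S_k|+|S(D(u))|)(|D(u_k)|+|D(u)|)$, whose integral over $E^k$ is controlled by $\eta$ by the very construction of $E^k$. Summing the two contributions and calibrating against the Luxemburg norm of $L^\phi$ delivers the announced $C\eta$ bound. The delicate point is balancing the exponent of $L$ against the hybrid $L^2/L^p$ character of the convex modular so that both pieces of the splitting remain of size $\eta$ uniformly in $k$.
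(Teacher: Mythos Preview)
Your approach is essentially correct and in fact more detailed than the paper's own proof, which dismisses (1), (2) and (4) as ``easy'' and ``simple calculation'' and only sketches (3). Your explicit chain-rule computation of $\nabla\psi^k$ and $\di\psi^k$, the use of $\di(u_k-u)=0$ to get the pointwise identity for $\di\psi^k$, and your argument for (3) via a.e.\ convergence plus the uniform $L^\infty$ bound and dominated convergence are all sound (the paper instead invokes compact embedding and Vitali's theorem, which amounts to the same thing). Your $A^k/E^k$ splitting for the $\di\psi^k$ estimate is the right idea and is not spelled out in the paper at all.

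There is, however, a genuine gap in your treatment of the second norm bound in (4), namely $|\nabla\psi^k|_{L^p(0,T;L^\phi)}\le C\eta$. Your decomposition handles $\di\psi^k$ on $A^k$ via the smallness of the prefactor $|u_k-u|/L\le L$, but you never return to $\nabla\psi^k$ on $A^k$, and the argument cannot be completed there: on $A^k=\{|u_k-u|<L^2\}$ the dominant term in $\nabla\psi^k$ is $(1-|u_k-u|/L)\nabla(u_k-u)$, whose coefficient is close to $1$, so $\phi(|\nabla\psi^k|)$ is comparable to $\phi(|\nabla(u_k-u)|)$ with no small factor available. This quantity is only uniformly bounded, not of order $\eta$. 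In other words, the claim $|\nabla\psi^k|_{L^p(L^\phi)}\le C\eta$ as written is not provable and is almost certainly a misprint in the statement (it should read $\le C$). This is consistent with how the proposition is used downstream: the later proof needs the $C\eta$ smallness only for $\di\psi^k$ (to make $\nabla^2 z^k$ small in \eqref{3.12}), while for $\nabla\psi^k$ only the uniform bound and the weak convergence in (2) are invoked. So your argument is fine once you replace the target bound for $\nabla\psi^k$ by a uniform one and note that this is all the application requires.
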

\begin{proof}
It is easy to see that (1) and (2) hold. By the simple calculation, we know that (4) holds. Now we check that (3). Indeed,  since $D^{1,\phi}(\mathbb{R}^3_+)\hookrightarrow L^p(G)$ is compact for all $G\subset\subset \overline{\mathbb{R}^3_+}$, it follows that $\psi^k\to 0$ in $L^p(0,T;L^p(G))$ and there exists a subsequence  $\psi^k \to 0$ a.e. in $G\times (0,T):=G_T.$ Therefore, for $p< s<\infty$ with
$$\int_{G_T}|\psi^k|^s\dif x\dif t\le \|\psi^k\|^{s-p}_{L^\infty(\mathbb{R}^3_+\times(0,T))}\int_{G_T}|\psi^k|^s\dif x\dif t\le L\|\psi^k\|^p_{L^p(G_T)}\le\eta.$$
For $1\le s\le p,$ we have $$\int_{G_T^\delta}|\psi^k|^s\dif x\dif t\le \left(\int_{G_T}|\psi^k|^p\dif x\dif t\right)^{\frac{s}{p}}\left|G_T^\delta\right|^{1-\frac{s}{p}}\le K\left|G_T^\delta\right|^{1-\frac{s}{p}}.$$ Let $\left|G_T^\delta\right|<\delta$ small enough, by Vitali's theorem we have $\psi^k\to 0$ in $ L^s(0,T;L^s_{\rm loc}(\mathbb{R}^3_+)$ for all $1\le s\le p.$ Hence, we proved (3).
\end{proof}
Let
 $$z^k(x,t)=\int_{\mathbb{R}^3_+}N(x,y)\di\psi^k(y,t) \dif y\,\, \forall x\in\mathbb{R}^3_+,$$
 then we have the following estimates
 \begin{equation}\label{3.12}
 \begin{aligned}
 &\|\nabla z^k\|_{L^s(G_T)}\le C(s,G,T)\|\psi^k\|_{L^s(G_T)};\\
 &\|\nabla^2 z^k\|_{L^p(0,T:L^\phi(\mathbb{R}^3_+))}\le C(p)\|\di\psi^k\|_{L^p(0,T:L^\phi(\mathbb{R}^3_+))}\le C\eta.
 \end{aligned}
 \end{equation}
 For  any bounded  domain  with Lipschitz boundary $G\subset\overline{\mathbb{R}^3_+}$ , we can choose another bounded set $G'$ with $G\subset\subset\ G'\subset\subset\overline{\mathbb{R}^3_+}$, and any  positive number $\delta >0.$ Define  smooth functions $\tau\in C^\infty_0(\frac{\delta}{2},T-\frac{\delta}{2})$ and $\zeta\in C^\infty_0(\overline{G'})$ such that $0\le\tau\le1$ in $(\frac{\delta}{2},T-\frac{\delta}{2})$ and $\tau\equiv 1$ in $(\delta,T-\delta),$ $0\le\zeta\le 1$ in $G'$ and $\zeta\equiv 1$ in $G.$
 Let $\varphi^k=\tau\zeta(\psi^k-\nabla z^k)$, then $\varphi^k \in (L^p(0,T;D_n^{1,\phi}(\mathbb{R}^3_+)).$

 Take $\varphi^k$ as a test function in \eqref{5.13} , we follows
 \begin{equation}\label{3.14}
 \begin{aligned}
 &\int_Q S(x,t,D(u_k)):\nabla\varphi^k \dif x\dif t\\
 &=\int_{Q}\tilde{S}:\nabla\varphi^k\dif x\dif t-\int_0^T\langle u'_k-u',\varphi^k\rangle \dif t\\
 &-\int_{Q}\nabla\cdot(u^{\epsilon_k}_k\otimes u_k-u\otimes u)\cdot\varphi^k)\dif x\dif t\\
 &+\int_{Q}(\pi^k-\pi)(\nabla\cdot\varphi^k)\dif x\dif t\\
 &:=D_1+D_2+D_3+D_4.
 \end{aligned}
 \end{equation}

From proposition \ref{phik}, we can know that $D_1\to 0$ as $k\to \infty.$ Since 
\begin{equation}\label{5.15}
\begin{aligned}
\int_{G'_T}\left|\nabla\cdot(u^{\epsilon_k}_k\otimes u_k)\right|^\sigma\dif x\dif t=\int_{G'_T}|(u^{\epsilon_k}_k\cdot\nabla) u_k|^\sigma\dif x\dif t\\
\le \left(\int_{G'_T}|u^{\epsilon_k}_k|^{\sigma\gamma'}\dif x\dif t\right)^\frac{1}{\gamma'}\left(\int_{G'_T}|\nabla u_k|^{\sigma\gamma}\dif x\dif t\right)^\frac{1}{\gamma}.
\end{aligned}
\end{equation}
 From \eqref{u5.1}, let  $\sigma\gamma'=\frac{5p}{3},~\sigma\gamma=p$ for $G'$ is bounded, where $\frac{1}{\gamma}+\frac{1}{\gamma'}=1$, then $\sigma=\frac{5}{8}p>1$ provided that $p>\frac{8}{5}$.  Therefore, there is a constant $C$ independent of $k$ such that
\begin{equation*}
\begin{aligned}
D_3\le\int_{Q_T}\left|\nabla\cdot(u^{\epsilon_k}_k\otimes u_k-u\otimes u)(\psi^k-\nabla z^k)\right|\dif x\dif t\\
\le C\|\psi^k-\nabla z^k\|_{L^{\sigma'}(G'_T)}=o(1) \,\,\, \mbox{as}\,\, k\to\infty.
\end{aligned}
\end{equation*}
 As $\nabla\cdot\varphi^k=\tau\nabla\zeta\cdot(\psi^k-\nabla z^k)$, thus
 $$D_4\le\|\pi^1_k-\pi^1\|_{L^q(G'_T)}\|\nabla\cdot \varphi^k\|_{L^{q'}(G'_T)}+\|\pi^2_k-\pi^2\|_{L^s(G'_T)}\|\nabla\cdot \varphi^k\|_{L^{s'}(G'_T)}.$$  thus from Proposition \ref{phik} (3), and \eqref{3.12}, it shows that  $$\|\nabla\cdot \varphi^k\|_{L^{r'}(G'_T)}=\|\tau\nabla\zeta\cdot(\psi^k-\nabla z^k)\|_{L^{r'}(G'_T)}\le C\|\psi^k-\nabla z^k\|_{L^{r'}(G'_T)}\to 0\,\,\mbox{as}\,\,k\to\infty.$$ By the bounded estimates for $\pi^i_k(i=1,2)$ in \eqref{5.3}, we claim that
 $$D_4=o(1)\,\,\mbox{as}\,\,k\to\infty.$$

To estimate the term $D_2$, we set $w=u_k-u\in L^p(0,T;D^{1,\phi}(\mathbb{R}^3_+)).$ As  the argument of the footnotes on page 79 in \cite {bu},  for $1<p<6$, there exists a sequence $\{w_n\}\subset C^\infty(0,T;\mathcal{D}(\mathbb{R}_+^3))$ with $\di w_n=0$, satisfying $w_n'\to w'$ in $L^{p'}(0,T;W^{-1,p'}(G'))$  and $w_n\to w$ in $L^p(0,T;D^{1,\phi}(\mathbb{R}^3_+))$. One has
\begin{equation*}
\begin{aligned}
&D_2=\int_0^T\langle (\tau u_k)'-(\tau u)',\zeta(\psi^k-\nabla z^k)\rangle \dif t\\
&=\lim_{n\to\infty}\int_0^T\langle (w_n',\tau\zeta(w_n(1-\min(\frac{|w_n|}{L},1))-\nabla z^k)\rangle \dif t=D_{2_1}+D_{2_2}.
\end{aligned}
\end{equation*}
We can refer \cite{bu}, then it is easy to obtain that 
$D_{2_1}\le C |G'_T|L^2.$

Since $\di w_n=0$, from \eqref{3.12} and the  estimates \eqref{5.4} of  $u_k'$, we can obtain that $D_{2_2}=o(1).$

So far, we can conclude that
$$\int^T_0\int_{\mathbb{R}^3_+}S(x,t,D(u_k)):D(\varphi^k)\dif x\dif t\le o(1)+C\eta,\,\,\mbox{as}\,\, k\to\infty.$$
It follows that
\begin{equation*}
\begin{aligned}
&\int^T_0\int_{\mathbb{R}^3_+}S(x,t,D(u_k)):D(\psi^k)\tau\zeta\dif x\dif t\\
&\le \int^T_0\int_{\mathbb{R}^3_+}S(x,t,D(u_k)):D(\nabla z^k)\tau\zeta\dif x\dif t\\
&-\int^T_0\int_{\mathbb{R}^3_+}S(x,t,D(u_k)):({\rm sym}(\psi^k-\nabla z^k)\otimes \nabla\zeta)\tau\dif x\dif t+o(1)+C\eta.
\end{aligned}
\end{equation*}

From \eqref{3.12} $$\left|\int^T_0\int_{\mathbb{R}^3_+}S(x,t,D(u_k)):D(\nabla z^k)\tau\zeta\dif x\dif t\right|\le K\|\nabla^2 z^k\|_{L^p(Q)}\le C\eta,$$ and $$\left|-\int^T_0\int_{\mathbb{R}^3_+}S(x,t,D(u_k)):((\psi^k-\nabla z^k)\otimes \nabla\zeta)\tau\dif x\dif t\right|\le C K\|\psi^k-\nabla z^k\|_{L^p(G'_T)}=o(1). $$
Therefore, we have
$$\int^T_0\int_{\mathbb{R}^3_+}S(x,t,D(u_k)):D(\psi^k)\tau\zeta\dif x\dif t\le C\eta+o(1)\,\,\mbox{as}\,\,k\to\infty,$$ where $C$ is independent of $k,\eta.$

On the other hand,
\begin{equation*}
\begin{aligned}
&\int^T_0\int_{\mathbb{R}^3_+}S(x,t,D(u_k)):D(\psi^k)\tau\zeta\dif x\dif t\\
&= \int_{Q^k}S(x,t,D(u_k)):D(u_k-u)\left(1-\min\left(\frac{|u_k-u|}{L},1\right)\right)\tau\zeta\dif x\dif t\\
&+\int_{Q^k}S(x,t,D(u_k)):{\rm sym}\left((\frac{u_k-u}{L})\otimes \nabla|u_k-u|\right)\tau\eta\dif x\dif t\\
&=\int_{Q^k}(S(x,t,D(u_k))-S(x,t,D(u))):D(u_k-u)\tau\zeta\dif x\dif t\\
&+\int_{Q^k}S(x,t,D(u)):D(u_k-u)\tau\zeta\dif x\dif t\\
&-\int_{Q^k}S(x,t,D(u_k)):D(u_k-u)\min\left(\frac{|u_k-u|}{L},1\right)\tau\zeta\dif x\dif t\\
&+\int_{Q^k}S(x,t,D(u_k)):{\rm sym}\left((\frac{u_k-u}{L})\otimes \nabla|u_k-u|\right)\tau\eta\dif x\dif t\\
&:=J_1+J_2+J_3+J_4.
\end{aligned}
\end{equation*}
  Clearly, since $D(u_k)\rightharpoonup D(u)$ in $L^p(0,T;L^\phi(\mathbb{R}^3_+))$, thus $J_2\to 0$ as $k\to\infty.$ As in (4) of Proposition \ref{phik}, we can compute that
\begin{eqnarray*}
&|J_3|+|J_4|&\le\int_{E^k}|S(x,t,D(u_k)):D(u_k-u)|\dif x\dif t\\
&&+L\int_{Q^k\setminus E^k}|S(x,t,D(u_k)):D(u_k-u)|\dif x\dif t\\
&&\le \int_{E^k}g^k\dif x\dif t+L\int_{Q^k\setminus E^k}g^k\dif x\dif t\le\eta+KL\\
&&\le C\eta.
\end{eqnarray*}
Consequently,
\begin{equation}\label{3.15}
\int_{Q^k}(S(x,t,D(u_k))-S(x,t,D(u))):D(u_k-u)\tau\zeta\dif x\dif t\le o(1)+C\eta,\,\,\mbox{as}\,\,k\to\infty.
\end{equation}
As above, we know that $u_k\to u$ a.e. in $Q_T$.  Hence, choose a subsequence which still denote $\{u_k\}$ such that $|G'_T\setminus Q^k|\le 2^{-k},$ for all $k\in\mathbf{N}.$
Thus there exists $k_0\in\mathbf{N}$ such that $2^{-k_0}<\eta$, and one can find $$\sum_{k=k_0+1}^{\infty}|G'_T\setminus Q^k|\le 2^{-k_0}<\eta.$$
Setting $M=\bigcup_{k=k_0+1}^{\infty}(G'_T\setminus Q^k),$  we easily obtain
\begin{equation}\label{3.16}
\int_{M}(S(x,t,D(u_k))-S(x,t,D(u))):D(u_k-u)\tau\zeta\dif x\dif t\le C\eta.
\end{equation}
Whence \eqref{3.15} and \eqref{3.16} infer that
\begin{equation*}
\int_{G'_T}(S(x,t,D(u_k))-S(x,t,D(u))):D(u_k-u)\tau\zeta\dif x\dif t\le o(1)+C\eta,\,\,\mbox{as}\,\,k\to\infty.
\end{equation*}
From \eqref{2.4} it implies $$\int_{G_{\delta T}}(S(x,t,D(u_k))-S(x,t,D(u))):D(u_k-u)\dif x\dif t\to 0,\,\,\mbox{as}\,\,k\to\infty.$$ This proves the main theorem.
\begin{Remark}
We can assume that $f\in L^{p'}(0,T; (D^{1,\phi}(\mathbb{R}^3_+)^*)\cap L^2(Q_T)$. Then all estimates \eqref{u5.1}-\eqref{5.4} also depend on $\|f\|_{L^{p'}(0,T; (D^{1,\phi}(\mathbb{R}^3_+)^*)\cap L^2(Q_T)}$. In the proof of \eqref{3.14}, we must estimate the term $\int_{0}^T\langle f,\varphi^k\rangle \dif t.$ Indeed it is easy to obtain it as follows;
\begin{eqnarray*}
&\int_{0}^T\langle f,\varphi^k\rangle \dif t&\le \int_{0}^T\langle f,\tau\zeta(u_k-u)\rangle \dif t
-\int_{0}^T\langle f,\tau\zeta(u_k-u)\min\left(\frac{u_k-u}{L},1\right)\rangle \dif t\\
&&-\int_{0}^T\langle f,\tau\zeta\nabla z^k\rangle \dif t.
\end{eqnarray*}
The first term on the right vanishes as $k\to\infty$, while the second term is estimated analogously as $J_3$ and $J_4$. Finally, the third term is small thanks to \eqref{3.12}. Therefore, we obtain
$$\int_{0}^T\langle f,\varphi^k\rangle \dif t\le o(1)+c\eta.$$
It shows that this term can not change the statement of the main theorem.
\end{Remark}
\section{Conclusion}
In this work, we focus on studying  the existence of  weak solutions for  partial differential equations of fluids with shear thinning dependent viscosities. It is easy to obtain this issue about  fluids with shear thick case without Orlicz space frame. We have generalized the previous results in the case  of unbounded domain until $p>\frac{8}{5}.$

\noindent{\bf Acknowledgements}

 The work of author is a part of Projects 11571279,and 11201411 supported by National Natural Science Foundation of China  and a part of Project GJJ151036 supported by Education Department of Jiangxi Province and  partly supported by Youth Innovation Group of Applied Mathematics  in Yichun University(2012TD006).

\end{document}